\theoremstyle{plain}
\newtheorem{theorem}{Theorem}[section]
\newtheorem{corollary}[theorem]{Corollary}
\newtheorem{lemma}[theorem]{Lemma}
\newtheorem{proposition}[theorem]{Proposition}
\theoremstyle{definition}
\newtheorem{example}[theorem]{Example}
\newtheorem{remark}[theorem]{Remark}
\numberwithin{equation}{section}
\newcommand{\ba}{{\bf a}}
\newcommand{\bx}{{\bf x}}
\newcommand{\bn}{{\bf n}}
\newcommand{\bv}{{\bf v}}
\newcommand{\bw}{{\bf w}}
\newcommand{\Th}{\mathcal{T}_h}
\newcommand{\Kh}{\mathcal{K}_h}
\newcommand{\Eho}{\mathcal{E}_h^o}
\newcommand{\norm}[1]{\lVert #1\rVert}
\newcommand{\snorm}[1]{|#1|}
\newcommand{\trinorm}[1]{{\left\vert\kern-0.25ex\left\vert\kern-0.25ex\left\vert #1 \right\vert\kern-0.25ex\right\vert\kern-0.25ex\right\vert}}
\title{Edge-averaged virtual element methods for convection-diffusion and convection-dominated problems\footnote{Submitted to the editors in 2024.}}
\author{Shuhao Cao,\thanks{School of Science and Engineering, University of Missouri-Kansas City, Kansas City, MO 64110 (\texttt{scao@umkc.edu})}
\and Long Chen,\thanks{Department of Mathematics, University of California at Irvine, Irvine, CA 92697 (\texttt{lchen7@uci.edu})}
\and Seulip Lee,\thanks{Department of Mathematics, University of Georgia, Athens, GA 30602 (\texttt{seulip.lee@uga.edu})}}
\date{%	\today
}
\begin{document}
	\maketitle
	
	\begin{abstract}
		This manuscript develops edge-averaged virtual element (EAVE) methodologies to address convection diffusion problems effectively in the convection-dominated regime. It introduces a variant of EAVE that ensures monotonicity (producing an $M$-matrix) on Voronoi polygonal meshes, provided their duals are Delaunay triangulations with acute angles. Furthermore, the study outlines a comprehensive framework for EAVE methodologies, introducing another variant that integrates with the stiffness matrix derived from the lowest-order virtual element method for the Poisson equation. Numerical experiments confirm the theoretical advantages of the monotonicity property and demonstrate an optimal convergence rate across various mesh configurations.
		\vskip 10pt
		\noindent\textbf{Keywords:} virtual elements; edge-averaged finite element; steady-state convection-diffusion equation; monotone schemes; convection-dominated problems; polygonal meshes.
				\vskip 10pt
		\noindent\textbf{AMS Subject Classification:} 65N30, 65N12
	\end{abstract}

%----------------------------------------------------------------------------
%------------------ 1. Introduction -----------------------------------------
%----------------------------------------------------------------------------
\section{Introduction}\label{sec:intro}

We consider the convection-diffusion equation in a bounded domain $\Omega\subset\mathbb{R}^2$ with the simply-connected polygonal Lipschitz boundary $\partial\Omega$:
\begin{subequations}\label{sys: governing}
\begin{alignat}{2}
-\nabla\cdot\left(\alpha(\mathbf{x}) \nabla u+\bm{\beta}(\mathbf{x})u\right) & = f && \quad \text{in } \Omega, \label{eqn: governing1} \\
u &= 0 && \quad \text{on } \partial\Omega,  \label{eqn: governing2}
\end{alignat}
\end{subequations}
where $\alpha\in C^0(\bar{\Omega})$ with $0<\alpha_{\text{min}}\leq\alpha(\mathbf{x})\leq \alpha_{\text{max}}$ for every $\mathbf{x}\in \Omega$, $\bm{\beta}\in (C^0(\bar{\Omega}))^2$, and $f\in L^2(\Omega)$.
The convection-diffusion problem~\eqref{sys: governing} expresses the convective and molecular transport along a stream moving at the given velocity $\bm{\beta}$ and with the diffusive effect from $\alpha$.
The convection-dominated regime means the situation that
\begin{equation*}
\alpha(\mathbf{x}) \ll\left|\bm{\beta}(\mathbf{x})\right|,\quad\forall \mathbf{x}\in\Omega,
\end{equation*}
and the convection-diffusion problem in this regime is important in describing viscous fluid models with high Reynolds numbers governed by the Navier-Stokes equations.

It has long been known that the standard-conforming finite element method (FEM) fails to provide accurate numerical solutions for convection-dominated problems.
The finite element solutions based on standard Galerkin projections contain spurious oscillations that deteriorate the solutions' quality (e.g., see \cite{1977Kikuchi,ELM05,STY05,2007JohnKnobloch}).
Hence, various stabilized FEMs have been developed in two primary directions.
The first direction is based on presenting a provable order of convergence in appropriate norms independent of small $\alpha$ by adding stabilization terms to the weak formulation.
This category includes the streamline-upwind Petrov-Galerkin (SUPG) methods~\cite{BRO82}, continuous interior penalty methods~\cite{BUR04}, and local projection stabilization (LPS) methods~\cite{KNO10}.
This stabilization technique mitigates such unexpected oscillations but does not eliminate them.
The second direction is the development of stabilized methods satisfying the discrete maximum principle (DMP).
When solving a second-order elliptic equation numerically, it is known that a numerical scheme satisfying the DMP guarantees discrete solutions without spurious oscillations (see, e.g., \cite{1973CiarletRaviart,2005BurmanErn,2008Roos}).
Among many examples, the algebraic flux corrections (AFC) method~\cite{BAR18} is an upwind-based discretization that directly satisfies the DMP with a reduced artificial diffusion effect, but it requires solving a nonlinear system.
The monotonicity property~\cite{gilbarg1977elliptic} is a sufficient condition for the DMP, so monotone schemes have received attention despite their first-order convergence when $\alpha=0$~\cite{godunov1959difference}.
Moreover, getting a non-singular $M$-matrix in a discretization implies the monotonicity property. 
Such an $M$-matrix is characterized by nonpositive off-diagonal entries and diagonal dominance, including at least one strictly diagonally dominant column.
We note that this $M$-matrix condition makes it easier to validate the DMP.

The edge-averaged finite element (EAFE) method~\cite{XU99,LAZ05} is a well-known linear monotone FEM, and it has been recently applied to space-time discretization~\cite{bank2017arbitrary} and nearly inviscid incompressible flows~\cite{li2022new}.
The EAFE method is monotone if the stiffness matrix of the linear conforming FEM for the Poisson equation is an $M$-matrix, achieved by a Delaunay triangulation.
The EAFE method has been motivated by prior studies of the Scharfetter-Gummel method~\cite{scharfetter1969large} (e.g., the finite volume Scharfetter-Gummel method~\cite{bank1998finite}, the exponential fitting method~\cite{brezzi1989numerical, brezzi1989two, dorfler1999uniform}, and the inverse-average-type finite element~\cite{markowich1988inverse}).
This idea has recently been extended to a general framework for scalar and vector convection-diffusion problems, called the simplex-averaged finite element (SAFE) methods~\cite{wu2020simplex}.
A stable mimetic finite difference (MFD) method~\cite{adler2022stable} has also been presented in the SAFE framework.
However, none of these methods work for general polygonal meshes because it may not be straightforward to translate the conformity of the polynomial elements on simplices to polygons.
For this reason, our work proposes a generalization of the EAFE method compatible with virtual element methods on polygonal meshes.

Virtual element methods (VEMs), a generalization of FEMs, are novel numerical PDE methods on general polygonal and polyhedral meshes.
The VEMs use a non-polynomial approximation in a polygonal element, and exact pointwise values of local basis functions are not needed inside the element. Instead the degrees of freedom (DoFs) can be used to compute all necessary quantities to build a stable and accurate discretization.
More details can be found in \cite{BEI13,BEI14} and Section 3 in this paper.

This paper specializes in developing stabilized virtual element methods for convection dominated problems.
The SUPG stabilization and local projection stabilization (LPS) have been successfully applied to the VEMs in \cite{BEI20,BEN16,BER18} and \cite{LI21}, respectively.
However, these approaches do not guarantee the DMP, which may yield undesired oscillations in discrete solutions.
Therefore, we focus on generalizing the EAFE stabilization to the VEMs and call the resulting stabilized methods \textit{edge-averaged virtual element} (EAVE) methods.
We present two variants of the EAVE methods: \textbf{(1)} a monotone EAVE method on Voronoi meshes with dual Delaunay and \textbf{(2)} a general EAVE framework working with general polygonal meshes.

\textbf{(1)} The monotone EAVE method guarantees the DMP on Voronoi meshes with dual Delaunay triangulations consisting of acute triangles.
The basic idea of deriving this method is utilizing flux approximations and dual edge patches for mass lumping inspired by \cite{XU99} and \cite{brezzi2006error}, respectively.

\textbf{(2)} The general EAVE framework presents another EAVE method on general polygonal meshes. Its monotonicity property holds if the stiffness matrix of the linear VEM for the Poisson equation is an $M$-matrix.
To derive a bilinear form in this framework, we define a flux approximation in the lowest-order $H(\textbf{curl})$-conforming virtual element space and observe the relationship between the lowest-order $H(\textbf{curl})$- and linear $H^1$-conforming spaces.

Through numerical experiments, we see that the monotone EAVE method produces a more stable and effective solution than the general framework on the same Voronoi meshes.
The numerical results also demonstrate that all the EAVE methods have first-order convergence on various polygonal meshes. Finally, we compare the numerical performance of the EAVE methods with other stabilized numerical methods and show the robustness and first order accuracy of EAVE.

The remaining sections are structured:
Section~\ref{sec:pre} introduces necessary notations and observes the EAFE stabilization's main idea.
The essential definitions and theoretical aspects of the VEMs are introduced in Section~\ref{sec:VEM}.
In Section~\ref{sec: fvm_vem}, we present a finite-volume bilinear form for the Poisson equation compatible with a virtual element space and stabilization-free, providing fundamental ideas of using dual edge patches for mass lumping.
Section~\ref{sec: MEAVE} proposes a monotone EAVE method on Voronoi meshes with dual Delaunay triangulations consisting of acute triangles.
Section~\ref{sec:EAVE} presents a general framework for the EAVE methods and demonstrates a sufficient condition for the monotonicity property.
In Section~\ref{sec: numerical}, our numerical experiments show the effect of edge-averaged stabilization on VEMs as well as the optimal order of convergence on different mesh types.
Finally, we summarize our contribution in this paper and discuss related research in Section~\ref{sec: conclusion}.

%----------------------------------------------------------------------------
%------------------ 2. Preliminaries ----------------------------------------
%----------------------------------------------------------------------------

\section{Preliminaries} \label{sec:pre}

To begin with, we introduce some notation used throughout this paper.
For a bounded Lipschitz domain $\mathcal{D}\in\mathbb{R}^2$, we denote the Sobolev space as $W^{m,p}(\mathcal{D})$ for $m\geq 0$ and $p\geq 1$.
Its norm and seminorm are denoted by $\|\cdot\|_{W^{k,p}(\mathcal{D})}$ and $|\cdot|_{W^{k,p}(\mathcal{D})}$, respectively.
When $p=2$, the Sobolev space is written as $H^m(\mathcal{D})$, and the space $H^0(\mathcal{D})$ coincides with $L^2(\mathcal{D})$.
The notation $H_0^1(\mathcal{D})$ means the space of $v\in H^1(\mathcal{D})$ such that $v=0$ on $\partial\mathcal{D}$ in the trace sense.
The polynomial spaces of degrees less than or equal to $k$ are denoted as $\mathcal{P}_k(\mathcal{D})$, and the space of continuous functions is denoted as $C^0(\mathcal{D})$.

% 2.1 Model Problem ---------------------------------------

\subsection{Model problem}
In the convection-diffusion equation~\eqref{eqn: governing1}, we denote the flux as
\begin{equation}
J(u) := \alpha \nabla u+\bm{\beta}u.\label{flux}
\end{equation}
The weak formulation of the problem~\eqref{sys: governing} is to find $u\in H_0^1(\Omega)$ such that
\begin{equation}
B(u,v)=F(v),\quad \forall v\in H_0^1(\Omega),\label{weakform}
\end{equation}
where the bilinear form and the functional on the right-hand side are defined as
\begin{equation*}
B(w,v):=\int_\Omega J(w)\cdot \nabla v\;{\rm d}\mathbf{x}\quad\text{and}\quad F(v) :=\int_\Omega fv\;{\rm d}\mathbf{x}.
\end{equation*}
The well-posedness of the continuous problem~\eqref{weakform} can be found in \cite{gilbarg1977elliptic}.

\subsection{Exponential fitting of the flux}
If there exists a potential function $\psi$ such that $
\nabla\psi  = \alpha^{-1}\bm{\beta},$
then the flux $J(u)$ can be expressed as a diffusion flux with a variable coefficient,
\begin{equation*}
J(u) = \kappa\nabla (e^{\psi} u)\quad \text{with}\quad\nabla\psi  = \alpha^{-1}\bm{\beta}, \quad\kappa(\bx)=\alpha(\bx) e^{-\psi(\bx)}.
\end{equation*}
In terms of differential forms, $e^{\psi}u$ is a 0-form, $\nabla(e^{\psi}u)$ is a 1-form, $J(u)$ is a 2-form, and $\kappa$ is a Hodge star mapping a 1-form to a 2-form.

If $\alpha$ and $\bm{\beta}$ are constant, the function $\psi$ is defined as  $\psi (\mathbf{x})=\alpha^{-1}\bm{\beta}\cdot\mathbf{x}$ and it is unique up to a constant.
However, such a potential function $\psi$ may not exist for general $\alpha$ and $\bm{\beta}$ if $\text{curl}\;(\alpha^{-1}\bm{\beta})\not=0$.

A proper flux approximation~\cite{XU99, LAZ05} is defined in the lowest order N\'ed\'elec space on triangular meshes, and it has been successfully applied to develop the edge-averaged finite element (EAFE) scheme.
In what follows, we briefly explain the main idea of the EAFE scheme on a triangulation.

% 2.2 Gradient and Flux Approximations ---------------------------------------
\subsection{Flux approximations}
Let $\mathcal{T}_h$ be a conforming triangulation for $\Omega$ and $V_h=\mathcal{P}_1(\mathcal{T}_h)$ be the piecewise linear $H^1$-conforming finite element space. 
We consider the local flux $J(u_h)|_T=(\alpha \nabla u_h+\bm{\beta}u_h)|_T$ for $u_h\in V_h$ and $T\in\Th$. We define $\bm{\tau}_E$ as a scaled tangent vector with $|\bm{\tau}_E|=|E|$.
Suppose $\bx(t)$ is a parametrization of $E$ such that $\bx(0)=\bx_i$ and $\bx(1)=\bx_j$ are two endpoints of $E$. Let us define
\begin{equation}\label{eq:edgepotential}
\psi_E(\bx(t)) = \int_0^t \frac{1}{|E|}\alpha^{-1}(\bm{\beta}\cdot \bm{\tau}_E)\; {\rm d} s, \quad \quad\kappa_E=\alpha e^{-\psi_E}>0.
\end{equation}
Then, it follows
$$
\left.J(u_h)\cdot\bm{\tau}_E\right|_E=\left.\kappa_E\nabla(e^{\psi_E}u_h)\cdot\bm{\tau}_E\right|_E.
$$ 
That is, along any line segment, the flux in that direction is a diffusion flux with variable coefficients $\kappa_E$.
Note that $\psi_E$ is unique up to a constant, and $\left.J(u_h)\cdot\bm{\tau}_E\right|_E$ is invariant to the constant change.

The local N\'ed\'elec space of the lowest order is defined as
\begin{equation*}
\mathcal{N}_0(T):=(\mathcal{P}_0(T))^2+\mathbf{x}^\perp\mathcal{P}_0(T),
\end{equation*}
where $\mathbf{x}^\perp$ is a $90^{\circ}$ rotation of $\mathbf{x}$ satisfying $\mathbf{x}\cdot\mathbf{x}^\perp=0$.
Its DoFs for any $\bv\in \mathcal{N}_0(T)$ are defined as
\begin{equation*}
\text{dof}_E(\mathbf{v})=\frac{1}{|E|}\int_E\mathbf{v}\cdot\bm{\tau}_E\;{\rm d}s=\left.(\mathbf{v}\cdot\bm{\tau}_E)\right|_E.
\end{equation*}
Its canonical basis functions $\{\bm{\chi}_{E_j}\}_{j=1}^3\subset \mathcal{N}_0(T)$ are obtained from $\text{dof}_{E_i}(\bm{\chi}_{E_j})=\delta_{ij}$ for $1\leq i,j\leq 3$.
If we apply $\text{dof}_E$ to $\kappa_E^{-1}J(u_h)|_T$, then we have
\begin{equation}
\text{dof}_{E}(\kappa_E^{-1}J(u_h))=\text{dof}_{E}(\nabla(e^{\psi_E} u_h))=\delta_{E}(e^{\psi_E} u_h):=(e^{\psi_E} u_h)(\bx_j)-(e^{\psi_E} u_h)(\bx_i),
\label{def delta}
\end{equation}
where $\mathbf{x}_i$ and $\mathbf{x}_j$ are the endpoints of $E$, that is, $\bm{\tau}_E=\bx_j-\bx_i$.
In the terminology of differential forms, the flux $J(u_h)$ is a $2$-form, and $\kappa_E^{-1}J(u_h)$ is a $1$-form. The functional $\text{dof}_E$ is applied to the $1$-form.

Define $\bar{\kappa}_E$ as the harmonic average of $\kappa_E=\alpha e^{-\psi_E}$ on $E$, i.e.,
\begin{equation}
\bar{\kappa}_E=\left(\frac{1}{|E|}\int_{E}\kappa_E^{-1}\;{\rm d}s\right)^{-1}.\label{harmonic average}
\end{equation}
We look for a flux approximation $J_T(u_h)\in \mathcal{N}_0(T)$ written as
\begin{equation*}
J_T (u_h)= \sum_{E\subset \partial T}\text{dof}_{E}(\kappa_E^{-1}J(u_h)) \bar{\kappa}_{E}\bm{\chi}_E=\sum_{E\subset \partial T}\delta_{E}(e^{\psi_E} u_h)\bar{\kappa}_{E}\bm{\chi}_E,
\end{equation*}
where $\bar{\kappa}_E$ can be considered an edgewise Hodge star mapping a 1-form $\bm{\chi}_E$ to a 2-form.
The flux approximation $J_T(u_h)$ is also invariant to the constant shift of $\psi_E$ to $\psi_E+c_E$.

Moreover, we can express $\nabla v_h$ in $\mathcal N_0(T)$ as 
$$
\nabla v_h=\sum_{E\subset \partial T} (\nabla v_h\cdot\bm{\tau}_E) \bm{\chi}_E = \sum_{E\subset \partial T}\delta_{E}(v_h)\bm{\chi}_E.
$$
As a result, the following auxiliary bilinear form is used in the discretization
\begin{align}
B^{T}_\mathcal{N}(u_h,v_h):=&\int_T J_T(u_h)\cdot\nabla v_h\;{\rm d}\mathbf{x}\nonumber\\
=&\sum_{E_i,E_j\subset \partial T}\bar{\kappa}_{E_i}\delta_{E_i}(e^{\psi_{E_i}} u_h)\delta_{E_j}(v_h) \int_T\bm{\chi}_{E_i}\cdot \bm{\chi}_{E_j}\;{\rm d}\mathbf{x}. \label{inter bilinear form}
\end{align}
The mass matrix $(\int_T\bm{\chi}_{E_i}\cdot \bm{\chi}_{E_j}\;{\rm d}\mathbf{x})$ for $\mathcal N_0(T)$ is in general a full matrix. We shall apply a mass lumping technique to make it diagonal.

% 2.3 Edge-Averaged Finite Element (EAFE) Schemes -----------------------------------

\subsection{Edge-averaged finite element scheme}\label{EAFE}
To derive the EAFE scheme~\cite{XU99,LAZ05}, we approximate the bilinear form~\eqref{inter bilinear form} using mass lumping.
We let $\{\lambda_i(\mathbf{x})\}_{i=1}^3\subset \mathcal{P}_1(T)$ be the barycentric coordinates to the vertices $\{\mathbf{x}_i\}_{i=1}^3$ in $T\in \Th$ and define
\begin{equation*}
\omega_E^T:=\frac{1}{2}\cot{\theta_E^T}=-\int_T\nabla\lambda_i\cdot\nabla\lambda_j\;{\rm d}\mathbf{x}\quad\text{when}\quad\bm{\tau}_E=\mathbf{x}_j-\mathbf{x}_i.
\end{equation*}
Then, the mass lumping for the local mass matrix in $\mathcal{N}_0(T)$~\cite{haugazeau1993condensation} is
\begin{equation}
\int_T\mathbf{v}\cdot\mathbf{w}\;{\rm d}\mathbf{x}\approx\sum_{E\subset \partial T}\omega_E^T\left.(\mathbf{v}\cdot\bm{\tau}_E)\right|_E\left.(\mathbf{w}\cdot\bm{\tau}_E)\right|_E,\quad\forall\bv,\bw\in \mathcal{N}_0(T).\label{eqn: mass lumping approx}
\end{equation}
That is, we use the diagonal matrix ${\rm diag} ( \omega_{E_1}^T,  \omega_{E_2}^T, \omega_{E_3}^T)$ to approximate the $3\times 3$ dense matrix $(\int_T\bm{\chi}_{E_i}\cdot \bm{\chi}_{E_j}\;{\rm d}\mathbf{x})$. 
The mass lumping is exact when $\mathbf{v},\mathbf{w}\in \nabla \mathcal{P}_1(T)\subset \mathcal N_0(T)$. 
Consequently, we have
\begin{align*}
B^{T}_\mathcal{N}(u_h,v_h) &= \int_T J_T(u_h)\cdot\nabla v_h\;{\rm d}\mathbf{x}\\ 
&\approx \sum_{E\subset \partial T}\omega_E^T\bar{\kappa}_E\delta_E(e^{\psi_E} u_h)\delta_E(v_h)=:B_{h}^T(u_h,v_h).
\end{align*}
The EAFE discrete problem is to find $u_h\in V_h$ such that 
\begin{equation}\label{eqn: EAFE_discrete problem}
\sum_{T\in\mathcal{T}_h}B_h^T(u_h,v_h)=F(v_h),\quad\forall v_h\in V_h.
\end{equation}

In the EAFE bilinear form, it is clear to see $\bar{\kappa}_E>0$ and $\delta_E(e^{\psi_E} \lambda_i)\delta_E(\lambda_j)=-e^{\psi_E(\mathbf{x}_i)}<0$. Therefore, the off-diagonal entries in the stiffness matrix of the EAFE bilinear form in~\eqref{eqn: EAFE_discrete problem} are nonpositive if and only if $\omega_E^T+\omega_E^{T'}\geq 0$ for any interior edge $E=\partial T\cap \partial T'$, which is also equivalent to the Delaunay condition of $\Th$ in two dimensions.
Hence, the stiffness matrix corresponding to the EAFE bilinear form is an $M$-matrix if and only if the stiffness matrix for the Poisson equation (computed by $-\omega_E^T$) is an $M$-matrix.

In implementation, the coefficients $\alpha$ and $\bm{\beta}$ are approximated as a constant $\alpha_E$ and a constant vector $\bm{\beta}_E$ on each edge $E\subset\partial T$, respectively.
For example, $\alpha_E=(\alpha(\mathbf{x}_i)+\alpha(\mathbf{x}_j))/2$ and $\bm{\beta}_E=(\bm{\beta}(\mathbf{x}_i)+\bm{\beta}(\mathbf{x}_j))/2$.
The edgewise potential function is given as $\psi_E(\mathbf{x})={\alpha_E}^{-1}\bm{\beta}_E\cdot\mathbf{x}$.
Thus, the DoFs are explicitly expressed as
\begin{equation}\label{eqn: easy_computing}
\bar{\kappa}_E\delta_E(e^{\psi_E} u_h)=\alpha_E \mathbb{B}({\alpha_E}^{-1}\bm{\beta}_E\cdot(\mathbf{x}_{i}-\mathbf{x}_{j}))u_h(\mathbf{x}_j)-\alpha_E \mathbb{B}({\alpha_E}^{-1}\bm{\beta}_E\cdot(\mathbf{x}_{j}-\mathbf{x}_{i}))u_h(\bx_{i}),
\end{equation}
where $\mathbb{B}(z)$ is the Bernoulli function,
\begin{equation}
\mathbb{B}(z)=
\left\{
\begin{array}{cl}
\displaystyle\frac{z}{e^z-1}&  z\not=0,\\
1& z=0.
\end{array}
\right.
\label{bernoulli function}
\end{equation}

%----------------------------------------------------------------------------
%------------------ 3. Virtual Element Methods ------------------------------
%----------------------------------------------------------------------------

\section{Virtual element methods}\label{sec:VEM}
This section introduces the essential definitions and fundamental theories of the VEMs.
Let $\mathcal{K}_h$ denote a decomposition of $\Omega$ into polygonal elements $K$ with a mesh size
$h=\max_{K\in\mathcal{K}_h}h_K$, where $h_K$ is the diameter of $K$.
In a polygonal element $K$, $\mathbf{x}_i$ for $1\leq i\leq N_V$ denote vertices and $E_j$ for $1\leq j\leq N_E$ denote edges, where $N_V$ is the number of vertices and $N_E$ is the number of edges.
For a simple polygon, $N_V=N_E$. We label the indices of the vertices or edges counterclockwise.
We denote scaled tangent vectors on the edges by $\bm{\tau}_j=\mathbf{x}_{j+1}-\mathbf{x}_j$ for $1\leq j\leq N_E-1$, and $\bm{\tau}_{N_E}=\mathbf{x}_{1}-\mathbf{x}_{N_V}$.

% 3.1 The Lowest Order Nodal and Edge Virtual Spaces -----------------------------------

\subsection{The lowest order nodal and edge virtual spaces}
We first introduce the lowest order local nodal space in \cite{BEI13,BEI14}.
Let the boundary space be
\begin{equation*}
B_1(\partial K):=\left\{ v_h\in C^0(\partial K):v_h|_E\in \mathcal{P}_1(E),\ \forall E\subset \partial K\right\}.
\end{equation*}
Then, the local nodal space~\cite{BEI13,BEI14} is defined as
\begin{equation*}
\mathcal{V}_1(K):=\left\{ v_h\in H^1(K): v_h|_{\partial K}\in B_1(\partial K),\ \Delta v_h|_K=0\right\},
\end{equation*}
and its DoFs are given as the values of $v_h$ at the vertices of $K$, i.e.,
\begin{equation}\label{vertexdof}
\text{dof}_i(v_h)=v_h(\mathbf{x}_i), \quad 1\leq i\leq N_V. 
\end{equation}
The dimension of $\mathcal V_1(K)$ is $N_V$, and
its canonical basis, $\{\phi_j\}_{j=1}^{N_V}\subset \mathcal V_1(K)$, is defined by $\text{dof}_i(\phi_j)=\delta_{ij}$.
It has been proved in \cite{BEI13} that the DoFs~\eqref{vertexdof} are unisolvent for $\mathcal V_1(K)$.
We also highlight that $\mathcal{P}_1(K)\subseteq \mathcal V_1(K)$, and the local $H^1$-projection is computable using the DoFs~\eqref{vertexdof}, that is, $\Pi^\nabla_{1}:\mathcal V_1(K)\rightarrow\mathcal{P}_1(K)$ with
\begin{align*}
\int_K\left(\nabla \Pi_{1}^\nabla v_h-\nabla v_h\right)\cdot \nabla p_1\;{\rm d}\mathbf{x}&=0,\quad\forall p_1\in\mathcal{P}_1(K),\\
\frac{1}{N_V}\sum_{i=1}^{N_V}\left(\Pi_{1}^\nabla v_h(\mathbf{x}_i)-v_h(\mathbf{x}_i)\right)&=0.
\end{align*}
The global nodal space is defined as
\begin{equation*}
\mathcal V_1(\mathcal{K}_h):=\left\{v_h\in H_0^1(\Omega):\left.v_h\right|_K\in \mathcal V_1(K),\ \forall K\in \mathcal{K}_h\right\},
\end{equation*}
and its global DoFs are all the values of $v_h$ at the internal vertices of $\mathcal{K}_h$.

We introduce the lowest order edge space with reduced DoFs~\cite{BEI16(3),BEI17(2)},
\begin{align*}
\mathcal{N}_0(K)=\left\{\mathbf{v}_h\in(L^2(K))^2 \right.: &\text{div}\;\mathbf{v}_h=0,\ \text{rot}\;\mathbf{v}_h\in\mathcal{P}_0(K),\\
&\left.\mathbf{v}_h\cdot\bm{\tau}_E\in\mathcal{P}_0(E),\ \forall E\subset\partial K\right\},
\end{align*}
where $\text{rot}\;\bv = \partial v_2/\partial x-\partial v_1/\partial y$
for a vector $\bv=\langle v_1,v_2\rangle$.
The DoFs for $\mathcal{N}_0(K)$~\cite{BEI16(3),BEI17(1),BEI17(2),BEI18} use the lowest order edge moments, similar to those of $\mathcal{N}_0(T)$,
\begin{equation}
\text{dof}_E(\mathbf{v}_h)=\frac{1}{|E|}\int_{E}\mathbf{v}_h\cdot\bm{\tau}_{E}\;{\rm d}s=\left.(\mathbf{v}_h\cdot \bm{\tau}_E)\right|_E\label{edgedof},
\end{equation}
for each edge $E\subset \partial K$, and
the canonical basis, $\{ \bm{\chi}_j\}_{j=1}^{N_E}\subset \mathcal{N}_0(K)$, is defined by $\text{dof}_{E_i}(\bm{\chi}_j)=\delta_{ij}$ for $1\leq i\leq N_E$. Note that, unlike the finite element spaces, explicit forms of $\{ \bm{\chi}_j\}_{j=1}^{N_E}\subset \mathcal{N}_0(K)$ are unknown. Instead, only the DoFs are used to construct a discretization.

By integration by parts, $\text{rot}\;\mathbf{v}_h$ is determined by the DoFs~\eqref{edgedof}, so the dimension of $\mathcal{N}_0(K)$ is $N_E$.
It is easy to verify the unisolvence for $\mathcal{N}_0(K)$ because the divergence-free condition implies $\mathbf{v}_h=\textbf{rot}\;\vartheta=\langle\partial\vartheta/\partial y,-\partial\vartheta/\partial x\rangle$ for some differentiable function $\vartheta$, and integration by parts leads to
\begin{equation*}
\int_K|\mathbf{v}_h|^2\;{\rm d}\mathbf{x}=\int_K\mathbf{v}_h\cdot\textbf{rot}\;\vartheta\;{\rm d}\mathbf{x}=\int_K(\text{rot}\;\mathbf{v}_h)\vartheta\;{\rm d}\mathbf{x}-\sum_{E\subset\partial K}\frac{1}{|E|}\int_E(\mathbf{v}_h\cdot\bm{\tau}_E)\vartheta\;{\rm d}s.
\end{equation*}
We also emphasize that $\mathcal{N}_0(T)\subset \mathcal{N}_0(K)$, and the local $L^2$-projection $\bm{\Pi}_{0}^0:\mathcal{N}_0(K)\rightarrow(\mathcal{P}_0(K))^2$ is computable by the DoFs~\eqref{edgedof},
\begin{equation*}
\int_K\mathbf{v}_h\cdot\mathbf{p}_0\;{\rm d}\mathbf{x}=\int_K\mathbf{v}_h\cdot\textbf{rot}\;p_1\;{\rm d}\mathbf{x}=\int_K(\text{rot}\;\mathbf{v}_h)p_1\;{\rm d}\mathbf{x}-\sum_{E\subset\partial K}\frac{1}{|E|}\int_E(\mathbf{v}_h\cdot\bm{\tau}_E)p_1\;{\rm d}s,
\end{equation*}
for some $p_1\in \mathcal{P}_1(K)$ with $\textbf{rot}\;p_1 = \mathbf{p}_0$.

The global edge virtual element space is required to preserve the $H(\textbf{curl})$ conformity~\cite{BEI17(1),BEI17(2),BEI18} with respect to the space
\begin{equation*}
H_0(\text{rot};\Omega)=\{\bv\in(L^2(\Omega))^2: \text{rot}\;\bv\in L^2(\Omega),\ \bv\cdot\mathbf{t}=0\ \text{on}\ \partial \Omega\},
\end{equation*}
where $\mathbf{t}$ denotes the unit tangent vector.
Hence, the two-dimensional global space is defined as
\begin{equation*}
\mathcal{N}_0(\mathcal{K}_h):=\left\{\mathbf{v}_h\in H_0(\text{rot};\Omega):\left.\mathbf{v}_h\right|_K\in \mathcal{N}_0(K),\ \forall K\in \mathcal{K}_h\right\},
\end{equation*}
The global DoFs are $\mathbf{v}_h\cdot\bm{\tau}_E$ for all interior edges of the decomposition $\mathcal{K}_h$.

We present an important relation between the local nodal space $\mathcal V_1(K)$ and the local edge space $\mathcal{N}_0(K)$.
The detailed proof can be found in \cite{BEI18}.
\begin{proposition}\label{prop relation}
$\nabla \mathcal V_1(K)$ is a subset of $\mathcal{N}_0(K)$, and moreover
\begin{equation*}
\nabla \mathcal V_1(K) = \left\{\mathbf{v}_h\in \mathcal{N}_0(K)\mid \textnormal{rot}\; \mathbf{v}_h=0\right\}.
\end{equation*}
\end{proposition}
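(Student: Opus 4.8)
The plan is to establish the containment $\nabla\mathcal V_1(K)\subseteq\mathcal N_0(K)$ and the stronger identity in two steps: first the ``easy'' inclusions, obtained by directly checking the defining conditions of $\mathcal N_0(K)$; then the reverse inclusion, which is the only nontrivial part and which I would settle by a dimension count, so as to bypass a subtle boundary-regularity issue.

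Step one. Take $v_h\in\mathcal V_1(K)$ and put $\mathbf v_h:=\nabla v_h\in(L^2(K))^2$. I verify the three membership conditions of $\mathcal N_0(K)$ in turn. $(\mathrm i)$ $\mathrm{div}\,\mathbf v_h=\Delta v_h=0$ in $K$, since $v_h$ is harmonic in $K$ by definition of $\mathcal V_1(K)$. $(\mathrm{ii})$ $\mathrm{rot}\,\mathbf v_h=\mathrm{rot}\,\nabla v_h=0\in\mathcal P_0(K)$, because the $\mathrm{rot}$ of a gradient vanishes identically. $(\mathrm{iii})$ on each edge $E\subset\partial K$, the trace $v_h|_E$ lies in $\mathcal P_1(E)$, hence is affine along $E$, so its arclength derivative is constant; since $\mathbf v_h\cdot\boldsymbol\tau_E=|E|\,\partial_{\mathbf t}v_h|_E$ with $\boldsymbol\tau_E=|E|\,\mathbf t$, this forces $\mathbf v_h\cdot\boldsymbol\tau_E\in\mathcal P_0(E)$. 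Therefore $\nabla v_h\in\mathcal N_0(K)$, which gives $\nabla\mathcal V_1(K)\subseteq\mathcal N_0(K)$; and $\mathrm{rot}(\nabla v_h)=0$ shows in addition that $\nabla\mathcal V_1(K)\subseteq\{\mathbf v_h\in\mathcal N_0(K):\mathrm{rot}\,\mathbf v_h=0\}$.

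Step two. For the reverse inclusion I compare dimensions. The gradient operator $\nabla:\mathcal V_1(K)\to\mathcal N_0(K)$ has kernel equal to the constants (as $K$ is connected), so $\dim\nabla\mathcal V_1(K)=\dim\mathcal V_1(K)-1=N_V-1$. On the other side, $\mathrm{rot}:\mathcal N_0(K)\to\mathcal P_0(K)$ is onto --- e.g.\ $\mathbf x^\perp\in\mathcal N_0(K)$ has the constant nonzero value $\mathrm{rot}\,\mathbf x^\perp=2$, and $\mathcal P_0(K)$ is one-dimensional --- so its kernel has dimension $\dim\mathcal N_0(K)-1=N_E-1$. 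For a simple polygon $N_V=N_E$, hence the two subspaces $\nabla\mathcal V_1(K)$ and $\{\mathbf v_h\in\mathcal N_0(K):\mathrm{rot}\,\mathbf v_h=0\}$ have the same finite dimension; since the former is contained in the latter by Step one, they coincide.

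The main obstacle is precisely this reverse inclusion, i.e.\ showing that $\nabla$ already fills out the whole $\mathrm{rot}$-free subspace of $\mathcal N_0(K)$. A constructive alternative would start from $\mathbf v_h\in\mathcal N_0(K)$ with $\mathrm{rot}\,\mathbf v_h=0$, use simple-connectedness of $K$ to write $\mathbf v_h=\nabla w$ for some $w\in H^1(K)$ (unique up to a constant), note $\Delta w=\mathrm{div}\,\mathbf v_h=0$ and that $\partial_{\mathbf t}w$ is constant on each edge, and conclude $w\in\mathcal V_1(K)$; the delicate point there is verifying $w|_{\partial K}\in C^0(\partial K)$, i.e.\ that the edgewise-affine traces match at the vertices of the polygon, which requires care with traces of $H^1(K)$ functions at corners. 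The dimension-count argument circumvents this, and the resulting identity agrees with the proof in \cite{BEI18}.
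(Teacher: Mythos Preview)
Your argument is correct. The paper does not actually prove this proposition; it merely states that ``the detailed proof can be found in \cite{BEI18}.'' Your write-up therefore supplies what the paper omits. The forward inclusion is handled exactly as one would expect (harmonic $\Rightarrow$ divergence-free; gradient $\Rightarrow$ rot-free; edgewise-affine trace $\Rightarrow$ constant tangential component), and your choice to close the reverse inclusion by a dimension count is clean: $\dim\nabla\mathcal V_1(K)=N_V-1$, while surjectivity of $\mathrm{rot}:\mathcal N_0(K)\to\mathcal P_0(K)$ (witnessed by $\mathbf x^\perp\in\mathcal N_0(K)$, whose tangential component on each edge is indeed constant since $\boldsymbol\tau_E^\perp\cdot\boldsymbol\tau_E=0$) gives $\dim\ker(\mathrm{rot})=N_E-1$, and $N_V=N_E$ for a simple polygon. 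This sidesteps the corner-regularity issue you correctly flag in the constructive alternative, and is arguably the most economical route; nothing further is needed.
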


In addition, we show the relation between the canonical basis functions $\{\phi_i\}_{i=1}^{N_V}$ for $\mathcal V_1(K)$ and $\left\{ \bm{\chi}_j\right\}_{j=1}^{N_E}$ for $\mathcal{N}_0(K)$.
\begin{lemma} \label{lemma relation}
For the canonical basis $\bm{\chi}_m,\bm{\chi}_n\in \mathcal{N}_0(K)$ with $1\leq m<n\leq N_E$, we have
\begin{equation*}
\bm{\chi}_n-\bm{\chi}_m=-\sum_{l=m+1}^{n}\nabla\phi_l,
\end{equation*}
where $\phi_l$ is the canonical basis of $\mathcal{V}_1(K)$.
\end{lemma}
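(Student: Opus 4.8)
The plan is to identify both sides as elements of $\mathcal{N}_0(K)$ and then show they have identical degrees of freedom, invoking unisolvence. By Proposition~\ref{prop relation}, each $\nabla\phi_l$ lies in $\mathcal{N}_0(K)$, so the right-hand side $-\sum_{l=m+1}^{n}\nabla\phi_l$ is a legitimate element of $\mathcal{N}_0(K)$, and it suffices to compute $\text{dof}_{E_i}$ applied to it for every edge index $i$, $1\le i\le N_E$, and check agreement with $\text{dof}_{E_i}(\bm{\chi}_n-\bm{\chi}_m)=\delta_{in}-\delta_{im}$.

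The key computation is $\text{dof}_{E_i}(\nabla\phi_l) = \frac{1}{|E_i|}\int_{E_i}\nabla\phi_l\cdot\bm{\tau}_{E_i}\,{\rm d}s$. Since $\bm{\tau}_{E_i}=\bx_{i+1}-\bx_i$ is the scaled tangent along $E_i$ and $\phi_l\in\mathcal V_1(K)$ is continuous with $\phi_l|_{E_i}$ affine (it lies in $\mathcal P_1(E_i)$), the fundamental theorem of calculus along $E_i$ gives $\text{dof}_{E_i}(\nabla\phi_l)=\phi_l(\bx_{i+1})-\phi_l(\bx_i)=\delta_{l,i+1}-\delta_{l,i}$, using the nodal DoF property $\phi_l(\bx_k)=\delta_{lk}$ (with cyclic index convention $\bx_{N_V+1}=\bx_1$). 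Therefore
\begin{equation*}
\text{dof}_{E_i}\Bigl(-\sum_{l=m+1}^{n}\nabla\phi_l\Bigr) = -\sum_{l=m+1}^{n}\bigl(\delta_{l,i+1}-\delta_{l,i}\bigr) = -\bigl(\mathbf{1}_{m+1\le i+1\le n}-\mathbf{1}_{m+1\le i\le n}\bigr).
\end{equation*}
This telescoping sum vanishes unless $i$ is an endpoint of the index range: it equals $+1$ when $i=m$ (the term $\delta_{l,i}=\delta_{l,m}$ is excluded from the sum but $\delta_{l,i+1}=\delta_{l,m+1}$ is included, contributing $-(1-0)$... one must track signs carefully), and $-1$ when $i=n$, and $0$ otherwise. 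Matching this against $\delta_{in}-\delta_{im}$ — after being careful about which sign lands where — completes the identification via unisolvence of the DoFs for $\mathcal{N}_0(K)$.

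The main obstacle is purely bookkeeping: getting the index ranges and signs right in the telescoping sum, particularly handling the cyclic labeling of edges and vertices (so that $E_i$ runs from $\bx_i$ to $\bx_{i+1}$ with $\bx_{N_V+1}:=\bx_1$) and confirming that the two boundary contributions at $i=m$ and $i=n$ come out with exactly the signs $-1$ and $+1$ needed to match $\bm{\chi}_n-\bm{\chi}_m$ rather than $\bm{\chi}_m-\bm{\chi}_n$. No genuine analytic difficulty arises; the only structural inputs are Proposition~\ref{prop relation} (to land in $\mathcal{N}_0(K)$), the edgewise affineness of nodal basis functions, and the unisolvence of the edge DoFs stated earlier.
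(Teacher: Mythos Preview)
Your approach is correct and essentially identical to the paper's proof: both place the right-hand side in $\mathcal{N}_0(K)$ via Proposition~\ref{prop relation}, verify that its edge DoFs agree with those of $\bm{\chi}_n-\bm{\chi}_m$, and conclude by unisolvence. The paper simply asserts the DoF identity without writing out the telescoping computation; your only loose end is the sign bookkeeping you flag yourself---for the record, the value at $i=m$ is $-1$ (matching $-\delta_{im}$) and at $i=n$ is $+1$ (matching $\delta_{in}$), so the identity holds as stated.
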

\begin{proof}
Let $\mathbf{w}_h=-\sum_{l=m+1}^{n}\nabla\phi_l$. Then, $\mathbf{w}_h\in \nabla \mathcal V_1(K)\subset\mathcal{N}_0(K)$ by Proposition~\ref{prop relation}.
It is clear to see that
$\text{dof}_{E}(\mathbf{w}_h)=\text{dof}_{E}(\bm{\chi}_n-\bm{\chi}_m)$,
for all $E\subset \partial K$. Therefore, $\mathbf{w}_h=\bm{\chi}_n-\bm{\chi}_m$ of $\mathcal{N}_0(K)$ by the unisolvent condition. 
\end{proof}

% 3.2 Bilinear Forms for the Diffusion Part in the Model Problem -----------------------

\subsection{Bilinear forms for the Poisson equation}
We introduce bilinear forms appearing in virtual element methods for the Poisson equation.
The local continuous bilinear form for the Poisson equation is denoted as
\begin{equation}\label{eqn: conti_bilinear}
a^K(u,v) = \int_K \nabla u\cdot\nabla v\;{\rm d}\mathbf{x}.
\end{equation}
Since we do not know the explicit forms of $u_h,v_h\in \mathcal V_1(K)$ inside $K$, it would not be possible to compute $a^K(u_h,v_h)$ exactly.
Therefore, we use the equivalent bilinear form~\cite{BEI13,BEI14},
\begin{equation}
a_h^K(u_h,v_h)=\int_K\nabla \Pi_1^\nabla u_h\cdot\nabla\Pi_1^\nabla v_h\;{\rm d}\mathbf{x}+S^K(u_h-\Pi_1^\nabla u_h,v_h-\Pi_1^\nabla v_h)\label{Poisson bilinear form}
\end{equation}
for all $u_h,v_h\in \mathcal V_1(K)$. In this case, the stabilization term $S^K$ needs to satisfy that there are positive constants $c_0$ and $c_1$ independent of $h_K$ such that
\begin{equation*}
c_0 a^K(v_h,v_h)\leq S^K(v_h,v_h)\leq c_1a^K(v_h,v_h),
\end{equation*}
for every $v_h\in \mathcal V_1(K)$ with $\Pi_1^\nabla v_h=0$.
These inequalities directly imply the norm equivalence,
\begin{equation}
\gamma_*a^K(v_h,v_h)\leq a_h^K(v_h,v_h)\leq \gamma^*a^K(v_h,v_h),\quad\forall v_h\in \mathcal V_1(K),\label{stability}
\end{equation}
for some positive constants $\gamma_*$ and $\gamma^*$ independent of $h_K$ (see \cite{BEI13,BEI14,CHE18} for details).
Here, we introduce the most popular choice for $S^K$,
\begin{equation}
S_\mathcal{V}^K(u_h,v_h):=\sum_{i=1}^{N_V}\text{dof}_i(u_h)\text{dof}_i(v_h).\label{popular stabilization}
\end{equation}
Another choice of the stabilization term is the $H^{\frac{1}{2}}$-inner product~\cite{CAO18},
\begin{align}
\label{H half stabilization}S^K_\mathcal{E}(u_h,v_h):
=\sum_{E\subset \partial K}\delta_E(u_h)\delta_E(v_h),
\end{align}
where $\delta_E$ is defined in~\eqref{def delta}.
The verification of norm equivalence~\eqref{stability} for~\eqref{popular stabilization} can be found in \cite{CHE18} and for~\eqref{H half stabilization} in \cite{CAO18}, respectively.

In Section~\ref{sec: fvm_vem}, we will present a discretization of VEMs based on the finite volume formulation requiring no stabilization. 

% 3.3 Construction of the Right Hand Side --------------------------

\subsection{Construction of the right-hand side}\label{subsec: right hand side}

We use the approximation technique~\cite{BEI13} for the right-hand side. Let $f_0^K$ be a piecewise constant approximation of $f$. Then, we define
\begin{equation*}
F_h(v_h) := \sum_{K\in\mathcal{K}_h}\int_K f_0^K\bar{v}_h\;{\rm d}\mathbf{x}=\sum_{K\in\mathcal{K}_h}|K|f_0^K\bar{v}_h,\quad \bar{v}_h:=\frac{1}{N_V}\sum_{i=1}^{N_V} v_h(\mathbf{x}_i).
\end{equation*}
This approximation yields an optimal estimate~\cite{BEI13},
\begin{equation}
F_h(v_h)-F(v_h)\leq Ch\left(\sum_{K\in\mathcal{T}_h}\left|f\right|_{H^1(K)}^2\right)^{1/2}\left|v_h\right|_{H^1(\Omega)}.\label{estimate of rhs}
\end{equation}

%----------------------------------------------------------------------------
%------------------ 4. A Finite Volume Bilinear Form with Virtual Element ------------------------------
%----------------------------------------------------------------------------

\section{A finite volume bilinear form with virtual element}\label{sec: fvm_vem}
In this section, we present a finite volume bilinear form for the Poisson equation corresponding to $a^K(\cdot,\cdot)$ in~\eqref{eqn: conti_bilinear} on Voronoi meshes with respect to dual Delaunay triangulations consisting of acute triangles.
We employ piecewise constant approximations on dual edge patches inspired by \cite{brezzi2006error} and show that the finite volume bilinear form is compatible with 
the virtual element space $\mathcal V_1(\mathcal{K}_h)$ and stabilization-free.

% 4.1 Voronoi meshes with dual Delaunay triangulations -----------------------

\subsection{Voronoi meshes with dual Delaunay triangulations}
In two dimensions, a Delaunay triangulation consists of triangles whose circumcircles include no other vertex of the triangulation, which means that the sum of the two angles opposite to any internal edge is less than or equal to $\pi$.
On the other hand, we call a partition a Voronoi tessellation if each seed point has a corresponding region of all vertices closer to that seed than any other seed using the Euclidean distance.
The Delaunay and Voronoi partitions are dual in a convex domain because there are one-to-one relationships between vertices on one mesh and elements on the other. Indeed, each seed point of a Voronoi element is a vertex of a Delaunay triangle, while each circumcenter of a Delaunay triangle is a vertex of a Voronoi polygon.
Moreover, the duality between Delaunay and Voronoi means that the edges on a mesh are orthogonal to the ones on the other.
This dual configuration plays an essential role in finite volume methods~\cite{eymard2000finite}.

In this section, we assume all the triangles determined by interior nodes are acute; each triangle includes its circumcenter. Then, we generate a Voronoi mesh by connecting the neighboring circumcenters; each Voronoi element contains a vertex of a Delaunay triangle.
We consider the Voronoi mesh primary for cell-centered finite volume methods, while the Delaunay triangulation is dual (see the left figure in Figure~\ref{DelVoro}). This primary mesh is considered a sub-class of general Voronoi meshes whose dual Delaunay may include obtuse triangles.

On the boundary of the domain, we obtain Voronoi nodes by extending perpendicular bisectors of triangles' edges around the boundary, where additional corner Voronoi nodes may help to express the boundary more accurately.
Boundary Delaunay nodes are located between two Voronoi nodes along the boundary, satisfying the orthogonality of the triangles' edges and the boundary.
Hence, the control volumes (or triangles) in the dual Delaunay triangulation can cover the whole domain.
However, since considering the zero boundary condition~\eqref{eqn: governing2}, we exclude the control volumes containing the boundary Deluanay nodes and employ the one-to-one relationships between a control volume and a primary (Voronoi) vertex (see the right figure in Figure~\ref{DelVoro}).

\begin{figure}[h!]
       \centering
        \scalebox{0.4}{
  \includegraphics[width=10cm]{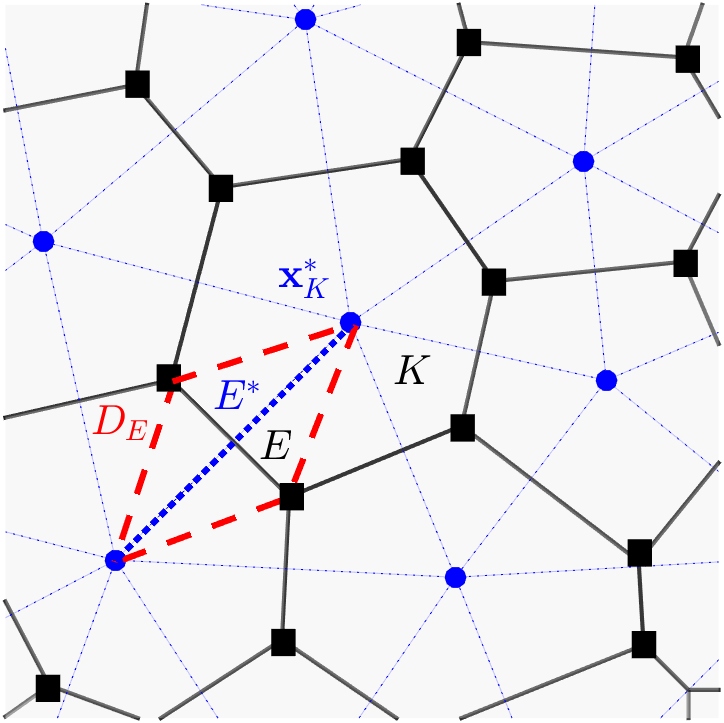}\hskip 1in
  \includegraphics[width=10cm]{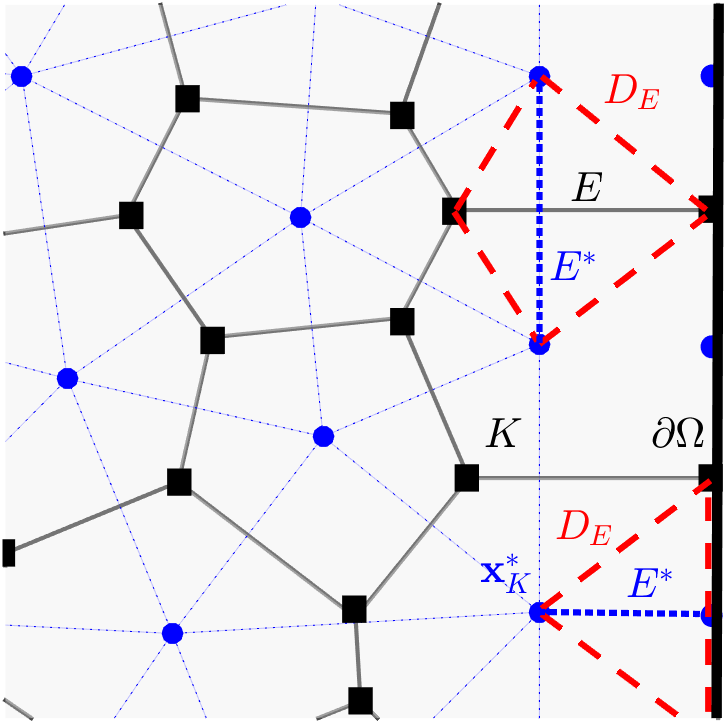}}
  \caption{A primary Voronoi mesh with a dual Delaunay triangulation consisting of acute triangles; black squares are Voronoi nodes, and blue circles are seed points (or Delaunay nodes).}\label{DelVoro}
  \end{figure}
  We let $\Kh$ denote a primary Voronoi mesh with a dual Delaunay triangulation $\mathcal{T}_h=\mathcal{K}_h^*$ consisting of acute triangles with interior nodes.
Each polygon $K\in\Kh$ contains only one Delaunay vertex $\mathbf{x}_K^*$ in the interior while each interior Voronoi vertex is the circumcenter of a triangle $T\in\mathcal{T}_h$.
Then, an edge $E$ of $K$ and $\mathbf{x}_K^*$ determine a sub-triangle in $K$.
Any polygon $K$ is split into sub-triangles, and the number of sub-triangles is the number of edges of $K$.
Every internal edge in $\Kh$ is shared by two such sub-triangles.
We let $D_E$ denote the union of the two sub-triangles, and $E^*$ denote the dual edge in $\Th$ included in $D_E$ with $E^*\perp E$.
For a boundary edge $E\subset\partial\Omega$, $D_E$ is defined as one triangle appearing in the domain $\Omega$, and $E^*$ is defined as the line segment from $\bx_K^{*}$ to $E$. Then, all the dual edge patches $D_E$ form a polygonal mesh of $\Omega$ and will be denoted by $\mathcal{D}_h$ (see Figure~\ref{DelVoro} for an illustration). 
Moreover, $\Eho$ denotes the collection of all interior edges in $\mathcal{K}_h$.
We also note that if $\bn_{E^*}$ is a unit normal vector on $E^*$ and $\bm{\tau}_E$ is a scaled tangent vector on $E$, then 
$\bn_{E^*}=\bm{\tau}_E/|E|$ 
because $E^*\perp E$. By the construction, their intersection 
$\mathbf{m}_{E^*} = E^*\cap E$
is the midpoint of $E^*$ but may not be the midpoint of $E$.

Finally, we recall the virtual element spaces on the primary Voronoi mesh and define function spaces on the dual triangulation and patches. 
\begin{itemize}
    \item $\mathcal{V}_1(\mathcal{K}_h)\subset H_0^1(\Omega)$: the linear nodal virtual element space on the polygons $K\in\mathcal{K}_h$.
    \item $\mathcal{N}_0(\mathcal{K}_h)\subset H_0(\text{rot};\Omega)$: the lowest order edge virtual element space on the polygons $K\in\mathcal{K}_h$.
    \item $V_0(\mathcal{T}_h)$: the set of piecewise constant functions $v_h^*$ on the triangles $T\in\Th$, excluding the control volumes containing the boundary Delaunay nodes.
    \item $W_0(\mathcal{D}_h)$: the set of piecewise constant functions $w_h$ on the edge patches $D_E\in\mathcal{D}_h$, where $w_h|_{D_E}\equiv0$ if $E\subset\partial \Omega$.
\end{itemize}

% 4.2 A finite volume bilinear form on Voronoi meshes -----------------------

\subsection{A finite volume bilinear form on Voronoi meshes}\label{subsec: fvm_vespace}
We define a one-to-one map between $\mathcal{V}_1(\mathcal K_h)$ and $V_0(\Th)$ by the DoFs. In each interior triangle $T\in\Th$, there is only one interior Voronoi vertex $\bx_T$ because of the duality between $\mathcal{K}_h$ and $\Th$, so the DoFs of $v_h\in \mathcal V_1(\mathcal{K}_h)$ at the vertex, $v_h(\bx_T)$, maps to $v_h^*|_T$ of $v_h^*\in V_0(\Th)$. Through this one-to-one mapping, we can extend the definition of $\delta_E$ to functions in $V_0(\Th)$, i.e., $\delta_E(v_h^*) = \delta_E (v_h)$ is the difference of function values of $v_h$ on the vertices of $E$ following the orientation of $E$.

We consider a balance equation for the Poisson equation over each $T\in \Th$,
\begin{equation}
-\int_{\partial {T}}\nabla u\cdot \bn\;{\rm d}s=\int_{T}f\;{\rm d}\bx.\label{eqn: balance_eqn}
\end{equation}
We define a normal flux approximation $\mathcal{G}_{\bn}(u_h)\in W_0(\mathcal D_h)$ as 
\begin{equation}
\left.\mathcal{G}_{\bn}(u_h)\right|_{D_E}:=\nabla u_h(\mathbf{m}_{E^*})\cdot \bn_{E^*} =\nabla u_h(\mathbf{m}_{E^*})\cdot \frac{\bm{\tau}_{E}}{|E|} =\frac{\delta_E(u_h)}{|E|}.\label{eqn: def_discrete_grad}
\end{equation}
A cell-centered finite volume method uses the discrete normal flux on $E^*\subset D_E$
\begin{equation*}
\int_{E^*}\nabla u_h \cdot \bn_{E^*}\;{\rm d}s\approx \int_{E^*}\mathcal{G}_{\bn}(u_h)\;{\rm d}s=\frac{|E^*|}{|E|}\delta_E(u_h).
\end{equation*}
For $u_h$ in the virtual element space $\mathcal V_1(\mathcal{K}_h)$, $\nabla u_h \cdot \bn_{E^*}$ is not computable but can be evaluated at the midpoint $\mathbf{m}_{E^*}$. By the duality, it equals to the tangential derivative along $E$, which is computable as $u_h|_E$ is linear. 
Then, if $\phi_T^{*}\in V_0(\Th)$ is the characteristic function of a triangle $T\in \Th$, i.e. $\phi_T^{*}\equiv 1$ in $T$ and $\phi_T^{*}\equiv 0$ in $\Omega\setminus T$, the balance equation~\eqref{eqn: balance_eqn} implies a discrete problem
\begin{equation*}
\sum_{E\in\Eho}\frac{|E^*|}{|E|}\delta_E(u_h)\delta_E(\phi_T^{*})=\int_{T}f\;{\rm d}\bx=\int_{\Omega}f\phi_T^{*}\;{\rm d}\bx=F(\phi_T^{*}).
\end{equation*}

By using linear combinations in $V_0(\Th)$, we define a local bilinear form
\begin{equation*}
\mathbf{a}_h^K(u_h, v_h^{*}):=\sum_{E\subset \partial K}\omega_E^K\delta_E(u_h)\delta_E(v_h^{*}),\quad \forall v_h^{*}\in V_0(\mathcal{T}_h),
\end{equation*}
where $\omega_E^K:=|E^*|/(2|E|)$.
Therefore, the corresponding discrete problem is to find $u_h\in \mathcal V_1(\Kh)$ such that
\begin{equation}
\sum_{K\in\Kh}\mathbf{a}_h^K(u_h, v_h^{*})=F(v_h^{*}),\quad \forall v_h^{*}\in V_0(\mathcal{T}_h),\label{eqn: fvm_discrete_prob}
\end{equation}
which is a Petrov-Galerkin formulation of the Poisson equation. By changing $\delta_E(v_h^{*})$ to $\delta_E(v_h)$, we obtain a Galerkin formulation $\mathbf{a}_h^K(u_h, v_h)$.

In the virtual element framework, the bilinear form can be explained in terms of mass lumping in the edge virtual element space $\mathcal{N}_0(K)$ for $K\in \Kh$ (see also~\eqref{eqn: mass lumping approx}),
\begin{equation}
a^K(u_h,v_h)=\int_K\nabla u_h\cdot\nabla v_h\;{\rm d}\bx\approx \sum_{E\subset \partial K}\omega_E^K\left.(\nabla u_h\cdot\bm{\tau}_E)\right|_E\left.(\nabla v_h\cdot\bm{\tau}_E)\right|_E=\mathbf{a}_h^K(u_h, v_h)\label{eqn: vem_mass_lump}
\end{equation}
because $\left.(\nabla v_h\cdot\bm{\tau}_E)\right|_E=\delta_E(v_h)$.
Moreover, we emphasize that the DoFs of $v_h\in \mathcal V_1(\Kh)$ compute the bilinear form $\mathbf{a}_h^K(\cdot,\cdot)$ without any stabilization term, so the discretization~\eqref{eqn: fvm_discrete_prob} can be viewed as a stabilization-free virtual element method.

For the right-hand side, we apply the mass lumping with the vertices of a triangle $T\in \Th$ by using the constant approximation $f_0^K$ on $K$ introduced in Section~\ref{subsec: right hand side}.
More precisely, when the three vertices of $T_j\in \Th$ are in polygons $K_1$, $K_2$, and $K_3$ in $\Kh$, the right hand side is calculated by
\begin{equation}
F(\phi_j^{*})=\int_{\Omega} f\phi_j^{*}\;{\rm d}\bx\approx\frac{|T_j|}{3}\left(f_0^{K_1}+f_0^{K_2}+f_0^{K_3}\right)=:\mathcal{F}_h(\phi_j^{*}).\label{eqn: fvm_rhs}
\end{equation}
Moreover, if the polygons $K_1$, $K_2$, and $K_3$ are regular, then $\mathcal{F}_h(\phi_j^*)=F_h(\phi_j)$ in Section~\ref{subsec: right hand side}.

% 4.3 $M$-matrix property -----------------------

\subsection{$M$-matrix property}
We stress that the stiffness matrix corresponding to $\ba_h^K(\cdot,\cdot)$ is an $M$-matrix through the following lemma.
\begin{lemma}\label{lemma: fvm_m_matrix}
If $\Kh$ is a primary Voronoi mesh with a dual Delaunay triangulation consisting of acute triangles, then the stiffness matrix corresponding to $\ba_h^K(\cdot,\cdot)$ is an $M$-matrix.
\end{lemma}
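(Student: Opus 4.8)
\emph{Proof plan.} The strategy is to show that the matrix assembled from $\ba_h^K(\cdot,\cdot)$ is symmetric positive definite with nonpositive off-diagonal entries; such a matrix (a Stieltjes matrix) is automatically a nonsingular $M$-matrix. First I would write the assembled matrix explicitly. Using the one-to-one correspondence recalled above between the DoFs of $\mathcal V_1(\Kh)$, the functions in $V_0(\Th)$, and the interior Voronoi vertices $\bx_T$, together with the nodal basis $\{\phi_T\}$ of $\mathcal V_1(\Kh)$ and the fact that $\delta_E(\phi_T)$ is the increment of $\phi_T$ along $E$ and hence lies in $\{-1,0,1\}$, a direct computation from $\ba_h^K(u_h,v_h)=\sum_{E\subset\partial K}\omega_E^K\delta_E(u_h)\delta_E(v_h)$ gives, for the global matrix $A$ of $\sum_{K\in\Kh}\ba_h^K(\cdot,\cdot)$ and with $w_E:=|E^*|/|E|$,
\begin{equation*}
A_{T,T'}=
\begin{cases}
\displaystyle\sum_{E\in\Eho,\ \bx_T\in E} w_E, & T'=T,\\
-\,w_E, & E\in\Eho\ \text{is the Voronoi edge joining}\ \bx_T\ \text{and}\ \bx_{T'},\\
0, & \text{otherwise},
\end{cases}
\end{equation*}
while, as a quadratic form, $\sum_{K\in\Kh}\ba_h^K(v_h,v_h)=\sum_{E\in\Eho} w_E\,\delta_E(v_h)^2$ for every $v_h\in\mathcal V_1(\Kh)$. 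The acuteness hypothesis enters precisely here: it guarantees that every circumcenter lies in the interior of its triangle, so the dual-patch construction is valid, $E$ and $E^*$ are orthogonal segments meeting at the midpoint $\mathbf m_{E^*}$ of $E^*$, and $|E^*|,|E|>0$; hence every $\omega_E^K=|E^*|/(2|E|)$ and every $w_E$ is strictly positive. In particular the off-diagonal entries of $A$ are $-w_E\le 0$ and the diagonal entries are strictly positive.

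Next I would verify that $A$ is symmetric positive definite. Symmetry and positive semidefiniteness are immediate from the quadratic-form identity. If $\sum_{K\in\Kh}\ba_h^K(v_h,v_h)=0$ for some $v_h\in\mathcal V_1(\Kh)$, then $\delta_E(v_h)=0$ for all $E\in\Eho$, so $v_h$ takes the same value at any two Voronoi vertices joined by a non-boundary Voronoi edge; since $\Omega$ is simply connected, the graph of Voronoi vertices and edges in $\Eho$ stays connected and reaches $\partial\Omega$, where $v_h=0$, and therefore $v_h\equiv 0$. Thus $A$ is symmetric positive definite. Alternatively, one may note that $\delta_E$ annihilates constants, which after elimination of the boundary DoFs yields weak row diagonal dominance for $A$, with strict diagonal dominance in the rows corresponding to interior vertices adjacent to $\partial\Omega$; this matches the $M$-matrix characterisation recalled in Section~\ref{sec:intro}.

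Finally, a symmetric positive definite matrix with nonpositive off-diagonal entries is a Stieltjes matrix, hence a nonsingular $M$-matrix, which proves the lemma. I expect the one delicate point to be the graph-connectivity statement used for positive definiteness---that deleting the $\partial\Omega$-edges from the Voronoi skeleton of a simply connected domain leaves a connected graph incident to the boundary---together with fixing the orientation conventions in $\delta_E$ so that each off-diagonal entry equals exactly $-w_E$; everything else follows at once from the strict positivity $\omega_E^K>0$, which is the only place the acuteness assumption is needed.
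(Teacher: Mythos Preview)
Your argument is correct. Your primary route is slightly different from the paper's: you show that the assembled matrix is a Stieltjes matrix (symmetric positive definite with nonpositive off-diagonals) by writing the global quadratic form $\sum_{E\in\Eho}w_E\,\delta_E(v_h)^2$ and using a graph-connectivity argument to conclude definiteness, whereas the paper argues directly via column diagonal dominance, using the partition of unity $\sum_i\zeta_i\equiv 1$ to get $\sum_i\mathbf A_{ij}=\sum_K\ba_h^K(\zeta_j,1)=0$ for interior $\bx_j$ with no boundary neighbour and $>0$ otherwise. Your ``alternative'' paragraph (that $\delta_E$ annihilates constants, giving weak diagonal dominance with strict dominance near the boundary) is exactly the paper's proof. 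The Stieltjes route buys you a clean SPD statement without appealing to the $M$-matrix/diagonal-dominance characterisation, at the cost of the connectivity detail you flagged; the paper's partition-of-unity route avoids connectivity entirely and is a couple of lines shorter. Both are standard and the off-diagonal computation and the role of acuteness are handled identically.
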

\begin{proof}
Let $\zeta_j$ be a global nodal virtual function in $\mathcal V_1(\Kh)$ such that $\zeta_j(\mathbf{x}_i)=\delta_{ij}$ for any internal vertex $\mathbf{x}_i$ in $\Kh$.
We consider the stiffness matrix whose entries are
\begin{equation*}
\mathbf{A}_{ij}=\sum_{K\in\Kh}\ba_h^{K}(\zeta_j,\zeta_i).
\end{equation*}
If $\mathbf{x}_i$ and $\mathbf{x}_j$ ($i\not=j$) are the endpoints of an edge $E\in \Eho$, we have
\begin{equation*}
\ba_{h}^K(\zeta_j,\zeta_i)=-\omega_E^K=-|E^*|/(2|E|)<0.
\end{equation*}
The partition of unity implies that
\begin{equation*}
\sum_{i}\mathbf{A}_{ij}=\sum_{K\in\mathcal{K}_{h}^{}}\mathbf{a}_h^K(\zeta_j,1)=0
\end{equation*}
if $\mathbf{x}_j$ has no neighbor on the boundary.
If $\mathbf{x}_j$ has a neighbor on the boundary, then $\sum_i \mathbf{A}_{ij}>0$.
Hence, the stiffness matrix from the bilinear form $\mathbf{a}_{h}^K(\cdot,\cdot)$ is an invertible $M$-matrix.
\end{proof}

% 4.4 Convergence analysis for Poisson equation -----------------------

\subsection{Convergence analysis for Poisson equation}
We use an alternative form with the normal flux approximation~\eqref{eqn: def_discrete_grad} on the edge patches $D_E$ for convergence analysis,
\begin{equation}
\sum_{K\in\Kh}\mathbf{a}_h^K(u_h, v_h)=\sum_{E\in \Eho}\frac{|E^*|}{|E|}\delta_E(u_h)\delta_E(v_h)=2\sum_{E\in \Eho}\int_{D_E}\mathcal{G}_{\bn}(u_h)\mathcal{G}_{\bn}(v_h)\;{\rm d}\mathbf{x}\label{eqn: fvm_alt_form}
\end{equation}
for $u_h,v_h\in \mathcal V_1(\Kh)$ because $|E^*||E|=2|D_E|$.

\begin{theorem}\label{thm: fvm_conver_analy}
Assume that $\mathcal{K}_h$ is a primary Voronoi mesh with a dual Delaunay triangulation consisting of acute triangles, $\nabla u\cdot \bn_{E^*}\in H^1(D_E)$ for all $E\in \Eho$, and $u\in C^0(\bar{\Omega})$. Then, we have for $u_h\in \mathcal V_1(\Kh)$ and $\mathcal{G}_\bn(u_h)$ in \eqref{eqn: def_discrete_grad},
\begin{equation*}
\left(\sum_{E\in\Eho}\norm{\nabla u \cdot {\mathbf n}_{E^*}-\mathcal{G}_{\bn}(u_h)}^2_{L^2(D_E)}\right)^{1/2}\leq Ch\left(\sum_{E\in\Eho}\snorm{\nabla u\cdot {\mathbf n}_{E^*}}^2_{H^1(D_E)}\right)^{1/2}.
\end{equation*}
\end{theorem}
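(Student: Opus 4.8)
The plan is to reduce the global estimate to a sum of local estimates on each dual edge patch $D_E$, and on each patch to invoke a standard approximation/Bramble–Hilbert argument for the piecewise-constant interpolation of $\nabla u \cdot \bn_{E^*}$. The first step is to recall from~\eqref{eqn: def_discrete_grad} that $\mathcal{G}_{\bn}(u_h)|_{D_E} = \delta_E(u_h)/|E|$ is a constant on $D_E$, equal to the tangential derivative $\nabla u_h(\mathbf{m}_{E^*}) \cdot \bm{\tau}_E/|E|$ at the midpoint $\mathbf{m}_{E^*}$ of $E^*$. I would then split $\nabla u \cdot \bn_{E^*} - \mathcal{G}_{\bn}(u_h)$ into two pieces: $(\nabla u \cdot \bn_{E^*} - \overline{\nabla u \cdot \bn_{E^*}}^{D_E})$, where the bar denotes the $L^2(D_E)$-average, plus $(\overline{\nabla u \cdot \bn_{E^*}}^{D_E} - \mathcal{G}_{\bn}(u_h))$. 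The first piece is controlled by the Poincaré inequality on $D_E$: since $\nabla u \cdot \bn_{E^*} \in H^1(D_E)$, we get $\norm{\nabla u \cdot \bn_{E^*} - \overline{\nabla u \cdot \bn_{E^*}}^{D_E}}_{L^2(D_E)} \leq C h_{D_E} \snorm{\nabla u \cdot \bn_{E^*}}_{H^1(D_E)}$, with a mesh-regularity-uniform Poincaré constant because $D_E$ is a union of two sub-triangles of a shape-regular acute Delaunay triangulation.

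For the second piece, the key observation is that $\mathcal{G}_{\bn}(u_h)$ is an \emph{accurate} value of the true flux: by duality $E^* \perp E$, so $\nabla u \cdot \bn_{E^*}$ restricted to $E$ equals the tangential derivative of $u$ along $E$, and since $u_h$ interpolates $u$ at the endpoints $\mathbf{x}_i, \mathbf{x}_j$ of $E$ (here I use $u \in C^0(\bar\Omega)$ so that nodal interpolation is well-defined and $\delta_E(u_h) = u(\mathbf{x}_j) - u(\mathbf{x}_i)$), one has $\delta_E(u_h)/|E| = \frac{1}{|E|}\int_E \partial_{\bm{\tau}_E} u \, {\rm d}s = \frac{1}{|E|}\int_E \nabla u \cdot \bn_{E^*}\, {\rm d}s$, i.e.\ $\mathcal{G}_{\bn}(u_h)$ is the edge-average of $\nabla u \cdot \bn_{E^*}$ over $E$. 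Thus the second piece becomes the difference between the $D_E$-average and the $E$-average of the same $H^1(D_E)$ function; a trace-type / scaled Bramble–Hilbert argument on $D_E$ bounds this by $C h_{D_E} \snorm{\nabla u \cdot \bn_{E^*}}_{H^1(D_E)}$ as well. Concretely, I would map $D_E$ to a reference patch of unit size, apply the Bramble–Hilbert lemma to the bounded linear functional $g \mapsto \overline{g}^{D_E} - \overline{g}^{E}$ (which annihilates constants), and scale back, using shape-regularity of $\mathcal{D}_h$ to keep the constant uniform.

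Combining the two pieces on each $D_E$ gives $\norm{\nabla u \cdot \bn_{E^*} - \mathcal{G}_{\bn}(u_h)}_{L^2(D_E)} \leq C h \snorm{\nabla u \cdot \bn_{E^*}}_{H^1(D_E)}$ with $h$ the global mesh size (absorbing $h_{D_E} \leq C h$), and summing the squares over $E \in \Eho$ yields the claimed estimate. The main obstacle I anticipate is ensuring that all the constants — the Poincaré constant on $D_E$, the Bramble–Hilbert constant for the average-difference functional, and the reference-to-physical scaling factors — are genuinely independent of $h$ and of the particular patch; this requires a shape-regularity hypothesis on the acute Delaunay triangulation (equivalently on $\mathcal{D}_h$) that is implicit in the setup, and care that $D_E$ being a (possibly non-convex) union of two triangles sharing the edge $E$ still admits a uniform Poincaré constant — which it does, since each such patch is star-shaped with respect to a ball of radius comparable to $h_{D_E}$ under shape-regularity. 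A secondary technical point is handling boundary patches $D_E$ with $E \subset \partial\Omega$, where $D_E$ is a single triangle and $E^*$ runs from $\bx_K^*$ to $E$; the same local argument applies verbatim since $\mathcal{G}_{\bn}(u_h)$ is still the edge-average of the flux.
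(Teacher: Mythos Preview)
Your argument has a genuine gap: you treat $u_h$ as if it were the nodal interpolant of $u$, writing ``$u_h$ interpolates $u$ at the endpoints $\mathbf{x}_i,\mathbf{x}_j$'' and $\delta_E(u_h)=u(\mathbf{x}_j)-u(\mathbf{x}_i)$. But in this theorem $u_h$ is the \emph{discrete solution} of the finite volume problem~\eqref{eqn: fvm_discrete_prob}, not the interpolant $u_I$; there is no reason to have $u_h(\mathbf{x}_i)=u(\mathbf{x}_i)$. Your local decomposition therefore proves only
\[
\norm{\nabla u\cdot\bn_{E^*}-\mathcal{G}_{\bn}(u_I)}_{L^2(D_E)}\leq C h\,\snorm{\nabla u\cdot\bn_{E^*}}_{H^1(D_E)},
\]
which is the interpolation error, and says nothing about $u_h$.

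What is missing is precisely the step that connects $u_h$ to $u$: a Galerkin--orthogonality (C\'ea-type) argument. The paper supplies this by introducing the \emph{$E^*$-average} $\bar{\mathcal{G}}_{\bn}(u)\in W_0(\mathcal{D}_h)$, $\bar{\mathcal{G}}_{\bn}(u)|_{D_E}=\frac{1}{|E^*|}\int_{E^*}\nabla u\cdot\bn_{E^*}\,{\rm d}s$. Because $u$ satisfies the exact balance equation~\eqref{eqn: balance_eqn}, $\bar{\mathcal{G}}_{\bn}(u)$ satisfies the same discrete relation as $\mathcal{G}_{\bn}(u_h)$ in~\eqref{eqn: fvm_discrete_prob}, yielding the orthogonality
\[
\sum_{E\in\Eho}\int_{D_E}\bigl(\mathcal{G}_{\bn}(u_h)-\bar{\mathcal{G}}_{\bn}(u)\bigr)\,\mathcal{G}_{\bn}(v_h)\,{\rm d}\mathbf{x}=0,\qquad\forall v_h\in\mathcal{V}_1(\mathcal{K}_h).
\]
Choosing $v_h=u_h-u_I$ and Cauchy--Schwarz then give $\sum_E\norm{\mathcal{G}_{\bn}(u_h)-\bar{\mathcal{G}}_{\bn}(u)}_{L^2(D_E)}^2\leq \sum_E\norm{\mathcal{G}_{\bn}(u_I)-\bar{\mathcal{G}}_{\bn}(u)}_{L^2(D_E)}^2$, after which a triangle inequality reduces everything to the two local approximation terms $\norm{\nabla u\cdot\bn_{E^*}-\mathcal{G}_{\bn}(u_I)}_{L^2(D_E)}$ and $\norm{\nabla u\cdot\bn_{E^*}-\bar{\mathcal{G}}_{\bn}(u)}_{L^2(D_E)}$. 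Your Bramble--Hilbert/Poincar\'e reasoning is exactly what is needed for those final local bounds, but it must come \emph{after} the orthogonality step, not in place of it.
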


\begin{proof}
We adopt the idea of the analysis in \cite{brezzi2006error}.
We define two different approximations of the exact normal flux $\nabla u\cdot \bn_{E^*}$.
The first one, $\bar{\mathcal{G}}_{\bn}(u)\in W_0(\mathcal{D}_h)$, is the average of the flux on $E^*\subset D_E$,
\begin{equation*}
\left.\bar{\mathcal{G}}_{\bn}(u)\right|_{D_E}:=\frac{1}{|E^*|}\int_{E^*} \nabla u\cdot\bn_{E^*}\;{\rm d}s
\end{equation*}
for each $D_E\in \mathcal{D}_h$. 
Then, it follows from the balance equation~\eqref{eqn: balance_eqn}, the definitions of $\bar{\mathcal{G}}_{\bn}(u)$ and $\mathcal{G}_{\bn}(v_h)$, and the fact $|E^*||E|=2|D_E|$ that
\begin{align*}
2\sum_{E\in\Eho}\int_{D_E}\bar{\mathcal{G}}_{\bn}(u)\mathcal{G}_{\bn}(v_h)\;{\rm d}\mathbf{x}=\sum_{E\in \Eho}|E^*|(\bar{\mathcal{G}}_{\bn}(u)|_{D_E})\delta_E(v_h^{*})=F(v_h^*).
\end{align*}
Hence, if we compare this equation with the discrete problem~\eqref{eqn: fvm_discrete_prob} applying~\eqref{eqn: fvm_alt_form}, then we have the orthogonality
\begin{equation*}
\sum_{E\in \Eho}\int_{D_E} (\mathcal{G}_{\bn}(u_h)-\bar{\mathcal{G}}_{\bn}(u)) \mathcal{G}_{\bn}(v_h)\;{\rm d}\mathbf{x}=0, \quad \forall v_h\in \mathcal V_1(\mathcal{K}_h).
\end{equation*}
The second approximation, $u_I\in \mathcal V_1(\Kh)$, is the nodal value interpolant of $u$ onto the virtual element space.
Then, we obtain
\begin{align*}
\sum_{E\in\Eho}\norm{\mathcal{G}_{\bn}(u_h)-\bar{\mathcal{G}}_{\bn}(u)}&^2_{L^2(D_E)}=\sum_{E\in\Eho}\int_{D_E}(\mathcal{G}_{\bn}(u_h)-\bar{\mathcal{G}}_{\bn}(u)) (\mathcal{G}_{\bn}(u_I)-\bar{\mathcal{G}}_{\bn}(u))\;{\rm d}\bx\\
&\leq\sum_{E\in\Eho}\norm{\mathcal{G}_{\bn}(u_h)-\bar{\mathcal{G}}_{\bn}(u)}_{L^2(D_E)}\norm{\mathcal{G}_{\bn}(u_I)-\bar{\mathcal{G}}_{\bn}(u)}_{L^2(D_E)}.
\end{align*}
Moreover, the Cauchy-Schwarz inequality implies that
\begin{equation*}
\sum_{E\in\Eho}\norm{\mathcal{G}_{\bn}(u_h)-\bar{\mathcal{G}}_{\bn}(u)}^2_{L^2(D_E)}\leq\sum_{E\in\Eho}\norm{\mathcal{G}_{\bn}(u_I)-\bar{\mathcal{G}}_{\bn}(u)}^2_{L^2(D_E)}.
\end{equation*}
Using the triangle inequality, we have
\begin{equation*}
\norm{\mathcal{G}_{\bn}(u_I)-\bar{\mathcal{G}}_{\bn}(u)}_{L^2(D_E)}\leq \norm{\nabla u\cdot\bn_{E^*}-\mathcal{G}_{\bn}(u_I)}_{L^2(D_E)}+ \norm{\nabla u\cdot \bn_{E^*}-\bar{\mathcal{G}}_{\bn}(u)}_{L^2(D_E)}.
\end{equation*}
Each term means the first order approximation because $\mathcal{G}_{\bn}(u_I)$ equals $\nabla u\cdot\bn_{E^*}$ at a point on $E\subset D_E$, and $\bar{\mathcal{G}}_{\bn}(u)$ equals $\nabla u\cdot\bn_{E^*}$ at a point on $E^*\subset D_E$.
\end{proof}

%----------------------------------------------------------------------------
%------------------ 5. A Monotone Virtual Element Method -----------------
%----------------------------------------------------------------------------

\section{A monotone virtual element method}\label{sec: MEAVE}

We present a monotone virtual element scheme by utilizing the finite volume approach in Section~\ref{sec: fvm_vem}.

% 5.1 A monotone edge-averaged virtual element method -----------------------

\subsection{A monotone edge-averaged virtual element method}

Let $\Kh$ be a primary Voronoi mesh with a dual Delaunay triangulation $\Th=\mathcal{K}_h^*$ consisting of acute triangles (see Figure~\ref{DelVoro}).
We consider the same one-to-one map between $\mathcal V_1(\mathcal{K}_h)$ and $V_0(\Th)$, and the constant space $W_0(\mathcal{D}_h)$ on the edge patches $D_E\in\mathcal{D}_h$ as in Section~\ref{sec: fvm_vem}.
For $J_K(u_h)\in\mathcal{N}_0(K)$, $J_K(u_h)\cdot\bn_{E^*}=(J_K(u_h)\cdot\bm{\tau}_E)/|E|$ on each interior edge $E\in\Eho$.
Thus, we define a flux approximation $\mathcal{J}_{\bn}(u_h)\in W_0(\mathcal{D}_h)$ in light of~\eqref{def delta},
\begin{equation}
\left.\mathcal{J}_{\bn}(u_h)\right|_{D_E}:= J_K(u_h)(\mathbf{m}_{E^*})\cdot \bn_{E^*} =J_K(u_h)(\mathbf{m}_{E^*})\cdot \frac{\bm{\tau}_E}{|E|}= \frac{\bar{\kappa}_E\delta_E(e^{\psi_E}u_h)}{|E|}.\label{eqn: definition of J and G}
\end{equation}
The normal flux $J_K(u_h)\cdot \bn_{E^*}$ is computable at the intersection point $\mathbf{m}_{E^*}$. 
In a balance equation corresponding to the convection-diffusion equation~\eqref{eqn: governing1} over each $T\in \Th$,
\begin{equation}
-\int_{\partial T}J(u)\cdot \bn\;{\rm d}s=\int_{T}f\;{\rm d}\bx,\label{eqn: conservative_formulation}
\end{equation}
the discrete flux $\mathcal{J}_{\bn}(u_h)$ approximates the flux $J(u_h)\cdot\bn_{E^*}$ on each $E^*\subset D_E$,
\begin{equation*}
\int_{E^*} J(u_h)\cdot \bn_{E^*}\;{\rm d}s\approx \int_{E^*}\mathcal{J}_{\bn}(u_h)\;{\rm d}s=\frac{|E^*|}{|E|}\bar{\kappa}_E\delta_E(e^{\psi_E}u_h).
\end{equation*}
By introducing $\phi_T^{*}\in V_0(\Th)$ such that $\phi_T^{*}\equiv 1$ in $T$ and $\phi_T^{*}\equiv 0$ in $\Omega\setminus T$, a cell-centered finite volume method for the balance equation~\eqref{eqn: conservative_formulation} is represented as
\begin{equation}
\sum_{E\in\Eho}\frac{|E^*|}{|E|}\bar{\kappa}_E\delta_E(e^{\psi_E}u_h)\delta_E(\phi_T^{*})=\int_{T}f\;{\rm d}\bx=F(\phi_T^{*}).\label{eqn: cell_fvm_local}
\end{equation}
Thus, using
linear combinations of such $\phi_T^{*}$ in $V_0(\Th)$,
we define a local bilinear form on $K\in\Kh$,
\begin{equation*}
\mathcal{B}_h^K(u_h, v_h^*):=\sum_{E\subset \partial K}\omega_E^K\bar{\kappa}_E\delta_E(e^{\psi_E}u_h)\delta_E(v_h^*),
\end{equation*}
where $\omega_E^K=|E^*|/(2|E|)$.
Therefore, the corresponding discrete problem is to seek $u_h\in \mathcal V_1(\Kh)$ such that
\begin{equation}
\sum_{K\in\Kh}\mathcal{B}_{h}^K(u_h,v_h ^*)=F(v_h^{*}),\quad\forall v_h^*\in V_0(\mathcal{T}_h^{}),\label{eqn: discrete problem_MEAVE}
\end{equation}
which is a Petrov-Galerkin discretization for the convection-diffusion equation~\eqref{eqn: governing1}.

We can also apply the mass lumping to derive a Galerkin formulation 
\begin{equation*}
\int_K J_K(u_h)\cdot\nabla v_h\;{\rm d}\bx\approx \sum_{E\subset K}\omega_E^K \left.(J_K(u_h)\cdot\bm{\tau}_E)\right|_E\left.(\nabla v_h\cdot\bm{\tau}_E)\right|_E=:\mathcal{B}_h^K(u_h,v_h).
\end{equation*}
This approach can be considered as a cell-centered finite volume Scharfetter-Gummel method.
One can find a vertex-centered finite volume Scharfetter-Gummel method~\cite{bank1998finite} when the primary mesh is a Delaunay triangulation and the dual one is a Voronoi mesh.
The vertex-centered method is identical to the EAFE method~\cite{LAZ05,XU99}.
Also, a stable mimetic finite difference (MFD) method~\cite{adler2022stable} is considered as a vertex-centered method.

\begin{corollary}\label{lemma: MEAVE_monotone}
Provided that a polygonal mesh $\Kh$ is a Voronoi mesh with a dual Delaunay triangulation consisting of acute triangles.
For $\alpha\in C^0(\bar{\Omega})$ and $\bm{\beta}\in(C^0(\bar{\Omega}))^2$, the EAVE method~\eqref{eqn: discrete problem_MEAVE} is monotone, and consequently~\eqref{eqn: discrete problem_MEAVE} is well-posed.
\end{corollary}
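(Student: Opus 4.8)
The plan is to establish monotonicity by showing that the stiffness matrix arising from the discrete problem~\eqref{eqn: discrete problem_MEAVE} is a non-singular $M$-matrix, which by~\cite{gilbarg1977elliptic} implies monotonicity and, in particular, invertibility and hence well-posedness. The argument closely parallels the reasoning used for the EAFE scheme in Section~\ref{EAFE} and the proof of Lemma~\ref{lemma: fvm_m_matrix}. First I would assemble the stiffness matrix with entries $\mathbf{A}_{ij}=\sum_{K\in\Kh}\mathcal{B}_h^K(\zeta_j,\zeta_i)$, where $\zeta_j$ is the global nodal virtual basis function of $\mathcal V_1(\Kh)$ associated with the interior vertex $\mathbf x_j$ (equivalently, via the one-to-one map, the characteristic function $\phi_{T_j}^*$ of the dual triangle $T_j$). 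The key local computation is the same one recorded below equation~\eqref{eqn: EAFE_discrete problem}: for an interior edge $E\in\Eho$ with endpoints $\mathbf x_i,\mathbf x_j$, one has $\delta_E(e^{\psi_E}\zeta_i)\,\delta_E(\zeta_j) = -e^{\psi_E(\mathbf x_i)}<0$, while $\bar\kappa_E>0$ by~\eqref{harmonic average} (since $\kappa_E=\alpha e^{-\psi_E}>0$ thanks to $\alpha\ge\alpha_{\min}>0$ and the continuity of $\alpha,\bm\beta$), and $\omega_E^K=|E^*|/(2|E|)>0$ by the acute-Delaunay assumption guaranteeing that each circumcenter lies inside its triangle so that $|E^*|>0$. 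Hence each off-diagonal entry $\mathbf A_{ij}$ with $i\neq j$ is strictly negative if $\mathbf x_i,\mathbf x_j$ share an edge and zero otherwise.

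Next I would establish the diagonal-dominance / column-sum property. Summing a column, $\sum_i \mathbf A_{ij} = \sum_{K\in\Kh}\mathcal{B}_h^K(\zeta_j, \mathbf 1)$ where $\mathbf 1 = \sum_i \zeta_i$ restricted appropriately; since $\delta_E$ of a constant vanishes, the interior contributions telescope to zero exactly as in Lemma~\ref{lemma: fvm_m_matrix}, giving $\sum_i\mathbf A_{ij}=0$ whenever $\mathbf x_j$ has no boundary neighbor, and $\sum_i\mathbf A_{ij}>0$ when $\mathbf x_j$ is adjacent to the boundary (there the ``missing'' boundary term contributes a strictly positive amount because the corresponding $\omega_E^K\bar\kappa_E\delta_E(e^{\psi_E}\zeta_j)\delta_E(\zeta_j)$ term is present on the diagonal but its negative off-diagonal partner is absent). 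Combined with the nonpositive off-diagonal structure and irreducibility of the adjacency graph (the mesh connects every interior vertex to the boundary through a path), this yields an irreducibly diagonally dominant matrix with at least one strict inequality, hence a non-singular $M$-matrix. Monotonicity $\mathbf A^{-1}\ge 0$ then follows from the standard characterization, and well-posedness of~\eqref{eqn: discrete problem_MEAVE} is immediate from non-singularity.

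The main obstacle, and the only place requiring genuine care beyond transcribing the EAFE argument, is the sign of $\omega_E^K$. In the EAFE setting the weight is $\omega_E^T=\tfrac12\cot\theta_E^T$ and positivity of the \emph{sum} over the two adjacent triangles is precisely the Delaunay condition; here, by contrast, the dual-Voronoi construction replaces $\omega_E^T+\omega_E^{T'}$ by the single geometric quantity $\omega_E^K=|E^*|/(2|E|)$, and I must argue that the acute-angle hypothesis forces $|E^*|>0$ for every interior edge — i.e., that the two circumcenters defining the dual edge $E^*$ are distinct and lie on the correct side. This is where the assumption ``dual Delaunay triangulation consisting of acute triangles'' is used essentially: acuteness guarantees each circumcenter is interior to its triangle, so consecutive circumcenters are distinct and the segment $E^*$ joining them crosses $E$ transversally with positive length, making $\omega_E^K$ strictly positive rather than merely nonnegative. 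Once this geometric fact is in hand, every sign in the matrix is pinned down and the $M$-matrix conclusion — already essentially recorded in Lemma~\ref{lemma: fvm_m_matrix} for the pure-diffusion case — carries over verbatim with the extra strictly positive factors $\bar\kappa_E$ and $e^{\psi_E(\mathbf x_i)}$ doing no harm.
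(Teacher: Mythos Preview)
Your proposal is correct and follows exactly the approach the paper intends: the paper's own proof simply reads ``The proof is similar to that of Lemma~\ref{lemma: fvm_m_matrix}, so we omit the proof,'' and you have spelled out precisely that argument, correctly noting that the extra strictly positive factors $\bar\kappa_E$ and $e^{\psi_E(\cdot)}$ leave the sign structure of the off-diagonal entries and column sums from Lemma~\ref{lemma: fvm_m_matrix} unchanged. Your discussion of why the acute-triangle hypothesis forces $|E^*|>0$ (hence $\omega_E^K>0$) is more explicit than anything the paper records, but it is exactly the geometric input the construction in Section~\ref{sec: fvm_vem} relies on.
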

\begin{proof}
The proof is similar to that of Lemma~\ref{lemma: fvm_m_matrix}, so we omit the proof.
\end{proof}

%----------------------------------------------------------------------------
%------------------ 6. General Framework for EAVE Methods -----------------
%----------------------------------------------------------------------------

\section{General framework for EAVE methods}\label{sec:EAVE}

This section presents a general framework for edge-averaged virtual element (EAVE) methods.
In this framework, we derive another EAVE bilinear form working with general polygonal meshes.
The monotone EAVE method in Section~\ref{sec: MEAVE} is a particular case of this general framework by applying the mass lumping in $\mathcal{N}_0(K)$ on special polygonal meshes.

% 6.1 Edge-averaged virtual element method on general polygons -----------------------
\subsection{EAVE method on general polygonal meshes}
We begin with the bilinear form with the flux approximation $J_K(u_h)\in\mathcal{N}_0(K)$ in light of~\eqref{def delta},
\begin{equation}
\int_K J_K(u_h)\cdot\nabla v_h\;{\rm d}\mathbf{x}=\sum_{E\subset \partial K}\bar{\kappa}_E\delta_E(e^{\psi_E}u_h)\int_K \bm{\chi}_E\cdot\nabla v_h\;{\rm d}\mathbf{x}.\label{bilinear form Bbar}
\end{equation}
We then apply a mass lumping technique and Lemma~\ref{lemma relation} (see \cite{lee2021edge} for details). 
Let $E_{ij}$ be the line segment joining $\mathbf{x}_i$ and $\mathbf{x}_j$ for $i<j$ in $K$.
Then, the EAVE bilinear form in a general polygon $K$ is defined as
\begin{equation}
B_h^K(u_h,v_h) := \sum_{1\leq i<j\leq N_V} \omega_{ij}^K\bar{\kappa}_{E_{ij}}\delta_{ij}^K(e^{\psi_{E_{ij}}} u_h)\delta_{ij}^K(v_h),\label{bilinear form Bh}
\end{equation}
where
$\omega_{ij}^K=-a_h^K(\phi_i,\phi_j)$ in~\eqref{Poisson bilinear form}, $\bar{\kappa}_{E_{ij}}$ is the harmonic average of $\kappa_{E_{ij}}$, and $\delta_{ij}^K(v_h)=v_h(\bx_j)-v_h(\bx_i)$.
Thus, the corresponding discrete problem is to find $u_h\in \mathcal V_1(\mathcal{K}_h)$ such that
\begin{equation}
\sum_{K\in \mathcal{K}_h}B_h^K(u_h,v_h)=F_h(v_h),\quad\forall v_h\in \mathcal V_1(\mathcal{K}_h).\label{discrete problem}
\end{equation}

\begin{remark}
We note that $\omega_{ij}^K=-a_h^K(\phi_i,\phi_j)$ in~\eqref{bilinear form Bh} is computable and already given in the virtual element methods for the Poisson equation.
If we choose $\omega_{ij}^K=-\mathbf{a}_h^K(\phi_i,\phi_j)$ as in~\eqref{eqn: fvm_discrete_prob} on a Voronoi mesh corresponding to a Delaunay triangulation consisting of acute triangles, the general bilinear form~\eqref{bilinear form Bh} becomes the one in the monotone EAVE method~\eqref{eqn: discrete problem_MEAVE}.
In practice, constant approximations $\alpha_{E_{ij}}$ and $\bm{\beta}_{E_{ij}}$ on $E_{ij}$ give a function $\psi_{E_{ij}}(\mathbf{x})={\alpha_{E_{ij}}}^{-1}\bm{\beta}_{E_{ij}}\cdot\mathbf{x}$ locally defined on $E_{ij}$.
Therefore, the bilinear form is computed as
\begin{align*}
\bar{\kappa}_{E_{ij}}\delta_{E_{ij}}^K(e^{\psi_{E_{ij}}} u_h)&=\alpha_{E_{ij}} \mathbb{B}({\alpha_{E_{ij}}}^{-1}\bm{\beta}_{E_{ij}}\cdot(\mathbf{x}_{i}-\mathbf{x}_{j}))u_h(\mathbf{x}_j)\\
&\qquad\qquad-\alpha_{E_{ij}} \mathbb{B}({\alpha_{E_{ij}}}^{-1}\bm{\beta}_{E_{ij}}\cdot(\mathbf{x}_{j}-\mathbf{x}_{i}))u_{h}(\mathbf{x}_i),
\end{align*}
where $\mathbb{B}(\cdot)$ is the Bernoulli function defined in~\eqref{bernoulli function}.
\end{remark}

% 6.2 A sufficient condition for monotonicity property -----------------------
\subsection{A sufficient condition for monotonicity property}
In addition, the following lemma presents a sufficient condition for the monotonicity property of the general framework of the EAVE methods.
\begin{lemma}\label{lemma: eave_monotone}
If the stiffness matrix for the Poisson equation with the virtual bilinear form~\eqref{Poisson bilinear form} is an $M$-matrix, then the stiffness matrix from the EAVE method~\eqref{discrete problem} is an $M$-matrix for $\alpha\in C^0(\bar{\Omega})$ and $\bm{\beta}\in \left(C^0(\bar{\Omega})\right)^2$.
\end{lemma}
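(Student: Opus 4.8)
The plan is to mirror the argument structure used in Section~\ref{EAFE} for the EAFE method and in Lemma~\ref{lemma: fvm_m_matrix}, reducing the $M$-matrix property of the EAVE stiffness matrix to that of the virtual-element Poisson stiffness matrix via the explicit formula~\eqref{bilinear form Bh}. First I would assemble the global stiffness matrix $\mathbf{A}$ with entries $\mathbf{A}_{ij} = \sum_{K\in\mathcal{K}_h} B_h^K(\zeta_j,\zeta_i)$, where $\{\zeta_i\}$ are the global nodal basis functions of $\mathcal V_1(\mathcal{K}_h)$, and examine a single off-diagonal entry. Fixing an interior edge-segment $E_{ij}$ shared by the polygons containing $\mathbf{x}_i$ and $\mathbf{x}_j$, the only surviving terms in $B_h^K(\zeta_j,\zeta_i)$ are those with the index pair $\{i,j\}$, and using $\delta_{ij}^K(e^{\psi_{E_{ij}}}\zeta_j) = e^{\psi_{E_{ij}}(\mathbf{x}_j)}\cdot 0 - e^{\psi_{E_{ij}}(\mathbf{x}_i)}\cdot 1 = -e^{\psi_{E_{ij}}(\mathbf{x}_i)}$ together with $\delta_{ij}^K(\zeta_i) = \zeta_i(\mathbf{x}_j)-\zeta_i(\mathbf{x}_i) = -1$, the product $\delta_{ij}^K(e^{\psi_{E_{ij}}}\zeta_j)\,\delta_{ij}^K(\zeta_i) = -e^{\psi_{E_{ij}}(\mathbf{x}_i)} < 0$. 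Since $\bar{\kappa}_{E_{ij}} > 0$ always, the sign of the off-diagonal contribution from each polygon is exactly the sign of $-\omega_{ij}^K = a_h^K(\phi_i,\phi_j)$. Summing over the (one or two) polygons containing the segment $E_{ij}$ shows $\mathbf{A}_{ij} \le 0$ precisely when $\sum_K \omega_{ij}^K \ge 0$, i.e. when the corresponding off-diagonal entry $-\sum_K \omega_{ij}^K$ of the Poisson stiffness matrix is nonpositive; this is where the hypothesis that the Poisson matrix is an $M$-matrix enters.

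Next I would handle the diagonal dominance / row or column sum condition. The key observation is that $B_h^K(u_h,v_h)$ is built so that its second argument $v_h$ enters only through differences $\delta_{ij}^K(v_h)$, hence $B_h^K(u_h,\mathbf{1}) = 0$ for the constant function; equivalently, substituting $v_h \equiv 1$ (the partition-of-unity combination $\sum_i \zeta_i$ plus boundary contributions) makes every column sum of the interior part vanish, exactly as in Lemma~\ref{lemma: fvm_m_matrix}. For interior vertices $\mathbf{x}_j$ with no neighbor on $\partial\Omega$ one gets $\sum_i \mathbf{A}_{ij} = \sum_K B_h^K(\zeta_j,1) = 0$, and for those adjacent to the boundary the omitted boundary basis functions carry positive weight $\bar\kappa_E \, e^{\psi_E(\mathbf{x}_j)} \, (-\omega)$-type terms so that $\sum_i \mathbf{A}_{ij} > 0$ (using again that the Poisson matrix is diagonally dominant with a strictly dominant column). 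Combined with the nonpositive off-diagonal entries, this gives that $\mathbf{A}$ is a (weakly) diagonally dominant matrix with nonpositive off-diagonals and at least one strictly dominant column, hence a nonsingular $M$-matrix.

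The main obstacle I anticipate is bookkeeping rather than anything deep: one must be careful that the index-pair sum in~\eqref{bilinear form Bh} runs over all pairs $1\le i<j\le N_V$ within each polygon, so for a non-triangular polygon a fixed global edge of the mesh may appear as $E_{ij}$ in more than the two polygons one naively expects, and "diagonal segments" $E_{ij}$ that are not mesh edges also contribute off-diagonal entries linking $\mathbf{x}_i$ and $\mathbf{x}_j$. However, the point is that $\omega_{ij}^K = -a_h^K(\phi_i,\phi_j)$ for these same pairs is exactly the quantity whose sum over $K$ gives the Poisson stiffness entry, so the correspondence between "off-diagonal entry of EAVE matrix" and "off-diagonal entry of Poisson matrix" is one-to-one with a strictly positive multiplicative factor $\bar\kappa_{E_{ij}} e^{\psi_{E_{ij}}(\mathbf{x}_i)}$ depending on which direction the entry is read — this asymmetry (the factor differs for $\mathbf{A}_{ij}$ versus $\mathbf{A}_{ji}$) is why one works with column sums and the $M$-matrix characterization stated in the introduction (nonpositive off-diagonals plus a strictly diagonally dominant column) rather than trying to symmetrize. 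I would close by invoking that characterization to conclude nonsingularity, exactly as at the end of the proof of Lemma~\ref{lemma: fvm_m_matrix}.
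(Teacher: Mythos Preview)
Your approach is essentially the same as the paper's: compute the local contribution $B_h^K(\zeta_j,\zeta_i)$, observe that it equals $-\omega_{ij}^K$ times a strictly positive factor depending only on the segment $E_{ij}$, and use the $M$-matrix hypothesis on the Poisson stiffness to conclude nonpositivity of the off-diagonals. You have a small bookkeeping slip---$\delta_{ij}^K(e^{\psi_{E_{ij}}}\zeta_j)=e^{\psi_{E_{ij}}(\mathbf{x}_j)}$, not $-e^{\psi_{E_{ij}}(\mathbf{x}_i)}$, since $\zeta_j(\mathbf{x}_j)=1$---but the product with $\delta_{ij}^K(\zeta_i)=-1$ is still negative, so the argument is unaffected; in fact your version is slightly more careful than the paper's, which asserts $\omega_{ij}^K\ge 0$ termwise, whereas you correctly factor out the common positive weight and use only $\sum_K\omega_{ij}^K\ge 0$. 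Your additional column-sum argument for diagonal dominance goes beyond what the paper's proof writes out (the paper simply asserts ``it suffices'' to check the off-diagonals).
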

\begin{proof}
It suffices to show that the off-diagonal components of the stiffness matrix are nonpositive.
For $i\not=j$, it is clear to see that $\mathbf{B}_{ij}=0$ if $\bx_i$ and $\bx_j$ are not the vertices of a polygon.
Otherwise, we let $\mathcal{K}_{ij}$ be the set of the polygons having both $\mathbf{x}_i$ and $\mathbf{x}_j$ as vertices.
Then, each component becomes
\begin{equation*}
\mathbf{B}_{ij}=\sum_{K\in\mathcal{K}_{ij}}B_h^{K}(\zeta_j,\zeta_i).
\end{equation*}
For any polygon $K\in\mathcal{K}_{ij}$, we have
\begin{equation*}
B_h^K(\zeta_j,\zeta_i) =-\omega_{ij}^{K}\bar{\kappa}_{E_{ij}}e^{\psi_{E_{ij}}(\mathbf{x}_j)}\quad\text{or}\quad -\omega_{ji}^{K}\bar{\kappa}_{E_{ji}}e^{\psi_{E_{ji}}(\mathbf{x}_j)}.
\end{equation*}
Note that $B_h^{K}(\zeta_j,\zeta_i)$ depends on the local indices of $i$ and $j$ which are determined counterclockwise in $K$.
Hence, since $\omega_{ij}^K= -a^K_h(\zeta_i,\zeta_j)\geq 0$ and $\bar{\kappa}_{E_{ij}}>0$, we get $\mathbf{B}_{ij}\leq 0$.
\end{proof}

%----------------------------------------------------------------------------
%------------------ 7. Numerical Experiments --------------------------------
%----------------------------------------------------------------------------

\section{Numerical experiments}
\label{sec: numerical}
In this section, we present numerical experiments to demonstrate the effect of the monotonicity property and the optimal order of convergence on different mesh types.
First, in the virtual element space $\mathcal V_1(\mathcal{K}_h)$, we compute a mesh-dependent norm defined by 
the bilinear form~\eqref{Poisson bilinear form}, and we denote it as follows:
\begin{equation}
\left\|v_h\right\|_A:=\left(\sum_{K\in\mathcal{K}_h}a_h^K(v_h,v_h)\right)^{1/2},\quad\forall v_h\in \mathcal V_1(\mathcal{K}_h).\label{eqn: A-norm}
\end{equation}
This norm, which we call the $A$-norm, is equivalent to the $H^1$-norm from the property of the bilinear form in~\eqref{stability}. 
We also compute the mesh-dependent $L^\infty$-norm,
\begin{equation*}
\left\|v_h\right\|_{\infty }:=\max_{i}\left(\text{dof}_i(|v_h|)\right),\quad\forall v_h\in \mathcal V_1(\mathcal{K}_h),
\end{equation*}
to check the presence of spurious oscillations on numerical solutions.
The numerical experiments are implemented by authors' codes developed based on $i$FEM~\cite{CHE09}.
Furthermore, we consider the following example problem:
\begin{example}
Let the computational domain be $\Omega=(0,1)\times(0,1)$. The convection dominated problem in \cite{ELM05} is governed by,
\begin{subequations}\label{sys: example1}
\begin{alignat}{2}
-\nabla\cdot\left(\epsilon \nabla u+\bm{\beta}u\right)& =f && \quad \text{in } \Omega, \label{eqn: example1} \\
u &= g && \quad \text{on } \partial\Omega,  \label{eqn: bdexample1}
\end{alignat}
\end{subequations}
where $\epsilon\ll1$ is the diffusion coefficient, $\bm{\beta}=\langle 0,-1\rangle$, and $f\equiv0$.
The exact solution to the problem is given as
\begin{equation}
u(x,y)=x\left(\frac{1-e^{(y-1)/\epsilon}}{1-e^{-2/\epsilon}}\right),\label{eqn: example_exact}
\end{equation}
so it has sharper boundary layer around $y=1$ when $\epsilon$ gets smaller.
\label{ex1}
\end{example}

% 7.1 Monotonicity test on Voronoi meshes with dual Delaunay -----------------------

\subsection{Monotonicity test on Voronoi meshes with dual Delaunay}

In this test, we compare the edge-averaged virtual element methods~\eqref{eqn: discrete problem_MEAVE} and~\eqref{discrete problem} on Voronoi meshes with dual Delaunay triangulations consisting of acute triangles:
\begin{itemize}
    \item \texttt{M-EAVE}: \textit{Monotone} edge-averaged virtual element method~\eqref{eqn: discrete problem_MEAVE};
    \item \texttt{EAVE}: Edge-averaged virtual element method~\eqref{discrete problem}.
\end{itemize}
  \begin{figure}[h!]
\centering
\begin{tabular}{cccc}
\texttt{hexa-dual}  && \texttt{voro-dual}\\
    \includegraphics[width=.25\textwidth]{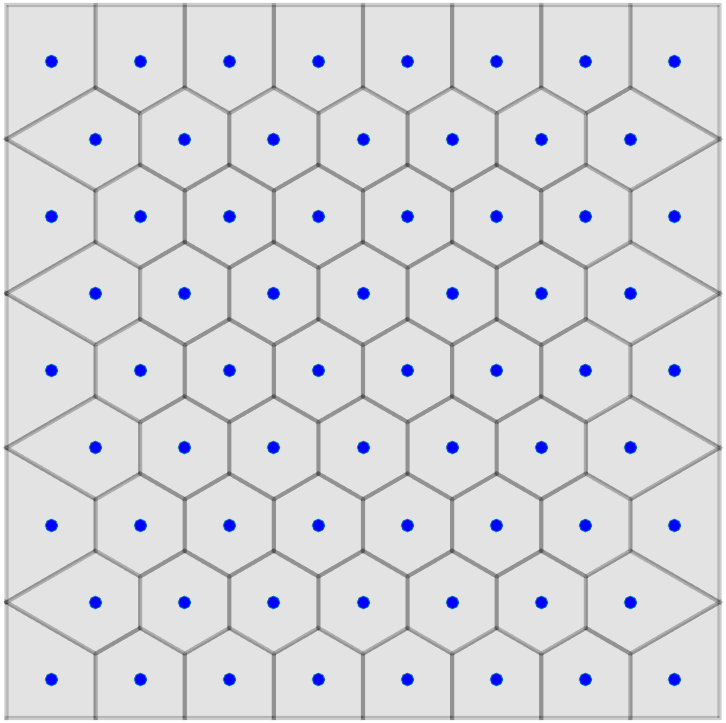}
    &
   &\includegraphics[width=.25\textwidth]{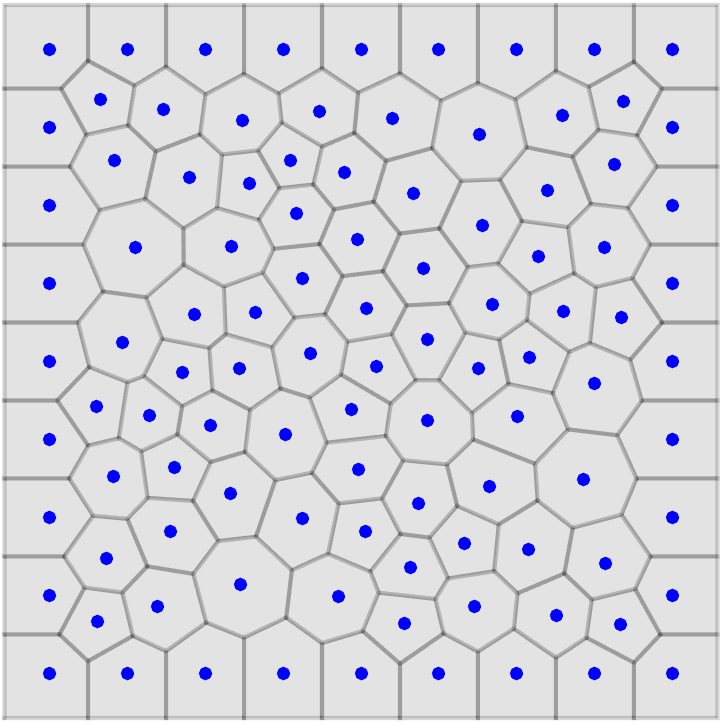}
   \end{tabular}
   \caption{Voronoi meshes corresponding to dual Delaunay triangulations with $h=2^{-3}$.}
   \label{figure: mesh type_dual}
\end{figure}
We use hexagonal and unstructured Voronoi meshes with dual Delaunay triangulations consisting of acute triangles.
In Figure~\ref{figure: mesh type_dual}, the blue dots mean vertices in a Delaunay triangulation, and it is easy to see that each triangle with the blue dots contains an internal Voronoi vertex.
\begin{table}[h!]
\footnotesize
    \caption{A mesh refinement study for \texttt{EAVE} and \texttt{M-EAVE} with $\epsilon=10^{-2}$ ($e_h=u_I-u_h$).}
    \label{table: mesh refinement_monotone}
    \centering
    \begin{tabular}{|c||c|c|c|c||c|c|c|c|}
    \hline
    &\multicolumn{8}{c|}{\texttt{hexa-dual}}\\
    \cline{2-9}   
    &\multicolumn{4}{c||}{\texttt{EAVE}} & \multicolumn{4}{c|}{\texttt{M-EAVE}}\\
    \cline{2-9}   
       $h$  & { $\norm{e_h}_A$} & {Order} &
       { $\norm{e_h}_\infty$} & {Order} &
       { $\norm{e_h}_A$} & {Order} &
       { $\norm{e_h}_\infty$} & {Order} \\ 
       \hline
       $2^{-3}$ & 1.631e-1 & - &  4.549e-2 & - & 1.728e-1 & - & 4.358e-2 & -\\
       \hline
       $2^{-4}$  & 1.058e-1 & 0.62 & 2.113e-2 & 1.11 & 9.922e-2 & 0.80 & 1.912e-2& 1.19  \\
       \hline
       $2^{-5}$   & 5.062e-2 & 1.06 & 7.153e-3 & 1.56 & 4.265e-2 & 1.22 & 5.871e-3 &1.70 \\
       \hline
       $2^{-6}$  & 1.737e-2 & 1.54 & 1.649e-3 & 2.12 & 1.430e-2 & 1.58 & 1.295e-3 & 2.18\\
       \hline
       \hline
    &\multicolumn{8}{c|}{\texttt{voro-dual}}\\
               \hline
    &\multicolumn{4}{c||}{\texttt{EAVE}} & \multicolumn{4}{c|}{\texttt{M-EAVE}}\\
                   \cline{2-9}   
        $h$ & {$\norm{e_h}_A$} & {Order} &
       {$\norm{e_h}_\infty$} & {Order} &
       {$\norm{e_h}_A$} & {Order} &
       {$\norm{e_h}_\infty$} & {Order} \\ 
       \hline
       $2^{-2}$ & 1.127e-1 & - &  3.291e-2 & - & 2.425e-1 & -  & 7.086e-2& -\\
       \hline
       $2^{-3}$  & 9.554e-2 & 0.24 & 1.829e-2 & 0.85 & 1.682e-1 & 0.53 & 2.922e-2 & 1.28 \\
       \hline
       $2^{-4}$   & 8.379e-2 & 0.19 & 9.731e-3 & 0.91 & 1.005e-1 & 0.74 & 8.397e-3 & 1.80\\
       \hline
       $2^{-5}$  & 4.399e-2 & 0.93 & 3.972e-3 & 1.29 & 4.647e-2 & 1.11 & 2.351e-3& 1.84\\
       \hline
    \end{tabular}
\end{table}
Hence, we solve the convection-dominated problem in Example~\ref{ex1} and perform a mesh refinement study for the \texttt{M-EAVE} and \texttt{EAVE} methods on the two Voronoi meshes with varying mesh sizes $h$ and $\epsilon =10^{-2}$.
Our test checks the $A$-norm error, $\norm{e_h}_A$ where $e_h=u_I-u_h$, equivalent to the $H^1$-error.
Table~\ref{table: mesh refinement_monotone} shows that the \texttt{M-EAVE} and \texttt{EAVE} methods have first-order convergence in the $A$-norm, and the $L^\infty$-errors tend to converge to zero of almost second-order.
We see from Table~\ref{table: mesh refinement_monotone} that the \texttt{M-EAVE} method performs better in the orders of convergence and magnitude of the errors in most cases.
Thus, we conclude that the \texttt{M-EAVE} method satisfying the monotonicity property is beneficial for solving convection-dominated problems on such Voronoi meshes.

% 7.2 Accuracy test on general polygonal meshes -----------------------

\subsection{Accuracy test on general polygonal meshes}
This test confirms that the \texttt{EAVE} method~\eqref{discrete problem} works with general polygonal meshes by checking the accuracy.
In order to confirm the accuracy of the \texttt{EAVE} method, we conduct a mesh refinement study with different mesh sizes $h$ and the fixed $\epsilon=10^{-2}$ on various polygonal meshes.
We also solve the problem in Example~\ref{ex1} on the following mesh types (see also Figure~\ref{figure: mesh type} as an example):
\begin{itemize}
\item \texttt{voro}: a Voronoi tessellation;
\item \texttt{opti}: a Voronoi tessellation where the cell shapes are optimized via a Lloyd algorithm~\cite{TAL12};
\item \texttt{ncvx}: a mesh composed by structured non-convex polygons and triangles.
\end{itemize}
\begin{figure}[h!]
\centering
\begin{tabular}{cccccc}
\texttt{voro}  & \texttt{opti}
   &\texttt{ncvx}\\
    \includegraphics[width=.25\textwidth]{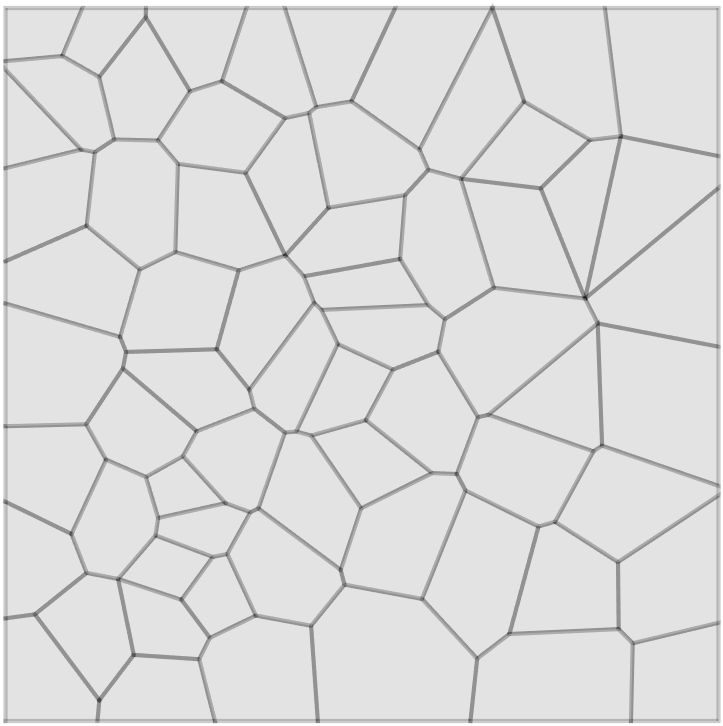}
   &\includegraphics[width=.25\textwidth]{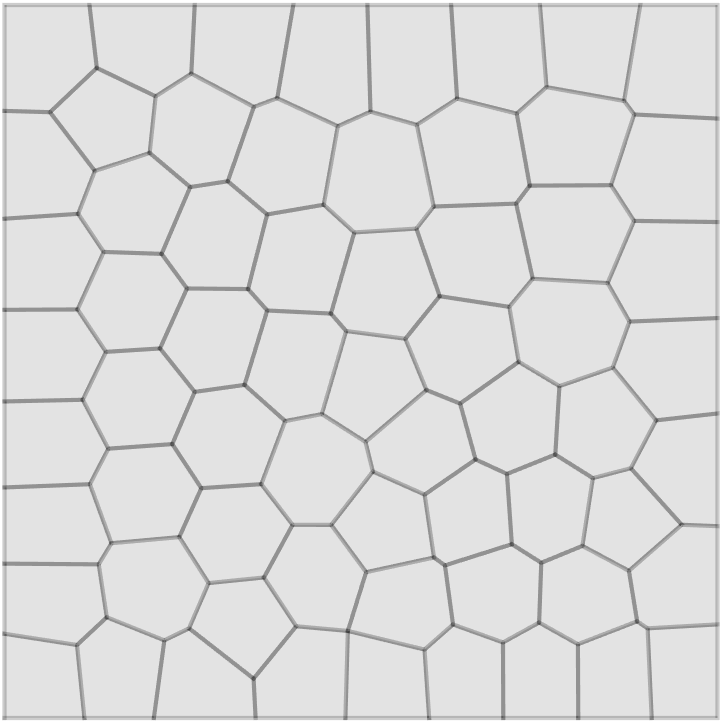}
   &\includegraphics[width=.25\textwidth]{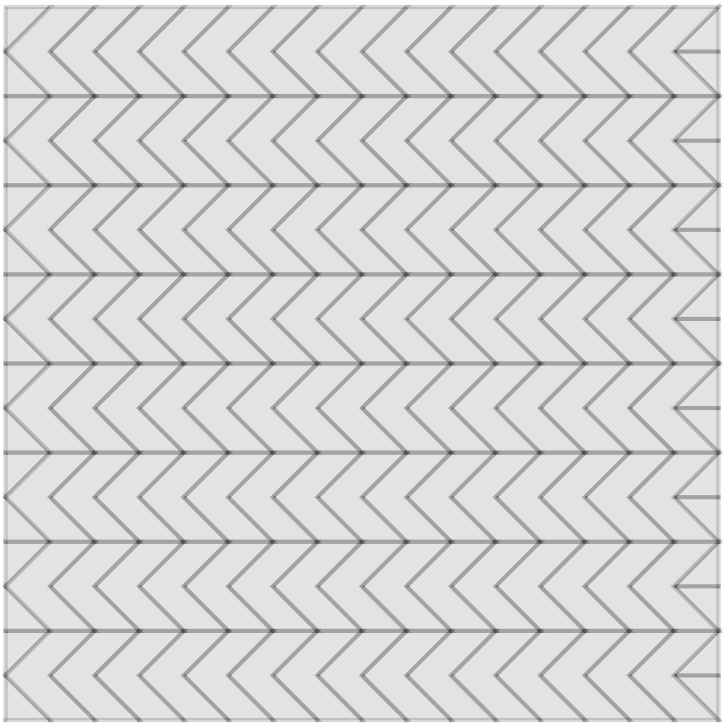}
   \end{tabular}
   \caption{Various polygonal meshes with $h=2^{-3}$.}
   \label{figure: mesh type}
\end{figure}

Table~\ref{table: mesh refinement} shows the \texttt{EAVE} method's $A$-norm error and $L^\infty$-error for the \texttt{voro}, \texttt{opti}, and \texttt{ncvx} meshes.
\begin{table}[!h]
\footnotesize
\caption{A mesh refinement study for \texttt{EAVE} on various meshes with $\epsilon=10^{-2}$.}
    \label{table: mesh refinement}
    \centering
    \begin{tabular}{|c||c|c||c|c||c|c|}
    \hline
        &  \multicolumn{2}{c||}{\texttt{voro}
        } & 
         \multicolumn{2}{c||}{\texttt{opti}
        } &
        \multicolumn{2}{c|}{\texttt{ncvx}}\\
    \cline{2-7}   
       $h$  & {$\norm{u_I-u_h}_A$} & {Order} &
       {$\norm{u_I-u_h}_A$} & {Order} &
       {$\norm{u_I-u_h}_A$} & {Order} \\ 
       \hline
       $2^{-4}$ & 2.383e-1 & - &  1.078e-1 & - & 1.355e-1 & - \\
       \hline
       $2^{-5}$  & 1.586e-1 & 0.59 & 6.250e-2 & 0.79 & 2.488e-2 & 2.45  \\
       \hline
       $2^{-6}$   & 8.057e-2 & 0.98 & 2.914e-2 & 1.10 & 4.987e-3 & 2.32  \\
       \hline
       $2^{-7}$  & 4.263e-2 & 0.92 & 1.414e-2 & 1.04 & 9.253e-4 & 2.43 \\
       \hline
       $2^{-8}$  & 2.126e-2 & 1.00 & 6.902e-3 & 1.03 & 1.665e-4 & 2.47 \\
       \hline
       \hline
        $h$ & {$\norm{u_I-u_h}_\infty$} & {Order} &
       {$\norm{u_I-u_h}_\infty$} & {Order} &
       {$\norm{u_I-u_h}_\infty$} & {Order} \\ 
       \hline
       $2^{-4}$ & 3.714e-2 & - &  1.139e-2 & - & 2.563e-2 & - \\
       \hline
       $2^{-5}$  & 1.494e-2 & 1.31 & 5.218e-3 & 1.13 & 3.551e-3 & 2.85 \\
       \hline
       $2^{-6}$   & 4.183e-3 & 1.84 & 1.797e-3 & 1.54 & 5.052e-4 & 2.81 \\
       \hline
       $2^{-7}$  & 1.224e-3 & 1.77 & 4.414e-4 & 2.03 & 6.625e-5 & 2.93 \\
       \hline
       $2^{-8}$  & 3.123e-4 & 1.97 & 1.140e-4 & 1.95 & 8.424e-6 & 2.98 \\
       \hline
    \end{tabular}
\end{table}
On all the given meshes, the orders of convergence for the $A$-norm errors are at least first-order.
Also, the $L^\infty$-errors converge to zero of almost second-order.
For the \texttt{ncvx} mesh, both $A$-norm error and $L^\infty$-error show super-convergence because the \texttt{ncvx} mesh consists of structured grid points and the points are aligned with the boundary layer.
Therefore, these numerical results confirm that the \texttt{EAVE} method produces accurate numerical solutions on general polygonal meshes.

\subsection{Test for convection-dominated case}

In this test, we check the performance of the \texttt{M-EAVE} and \texttt{EAVE} methods compared to other stabilized numerical methods for convection-dominated problems.
Various numerical methods having the same order of convergence are employed to solve the problem in Example~\ref{ex1}, and they are denoted as follows:
\begin{itemize}
    \item \texttt{FE}: Conforming finite element method with piecewise linear functions;
    \item \texttt{SUPG}: Streamline-upwind Petrov-Galerkin method~\cite{BRO82} with piecewise linear functions and the stability parameter
    $$s_E=\frac{0.25h^2_T}{\epsilon P_e}\left(1-\frac{1}{P_e}\right)\quad\text{and}\quad P_e=\frac{\beta_Th_T}{2\epsilon},$$
    where $\beta_T=\|\bm{\beta}\|_{L^\infty(T)}$ and $h_T=|T|^{1/2}$;
    \item \texttt{EAFE}: Edge-averaged finite element scheme~\eqref{eqn: EAFE_discrete problem};
    \item \texttt{M-EAVE}: Monotone edge-averaged virtual element method~\eqref{eqn: discrete problem_MEAVE};
    \item \texttt{EAVE}: Edge-averaged virtual element method~\eqref{discrete problem}.
\end{itemize}

The performance of the stabilized numerical methods can be explained by the presence of unexpected oscillations in their numerical solutions.
According to the exact solution~\eqref{eqn: example_exact},  numerical solutions are expected to capture the boundary layer without oscillation.
However, even though numerical methods imply small $H^1$- or $L^2$-errors, nonphysical oscillations may locally occur on their numerical solutions because they may not satisfy the discrete maximum principle with the given mesh size.
The $L^\infty$-error, $\norm{u_I-u_h}_\infty$, can identify such nonphysical oscillations, where $u_I$ is the nodal value interpolant of $u$.
For comparisons across the stabilized numerical methods, we solved the problem in Example~\ref{ex1} with varying $\epsilon$ values and mesh size $h$, respectively.
\begin{figure}[h!]
    \centering
\includegraphics[width=.35\textwidth]{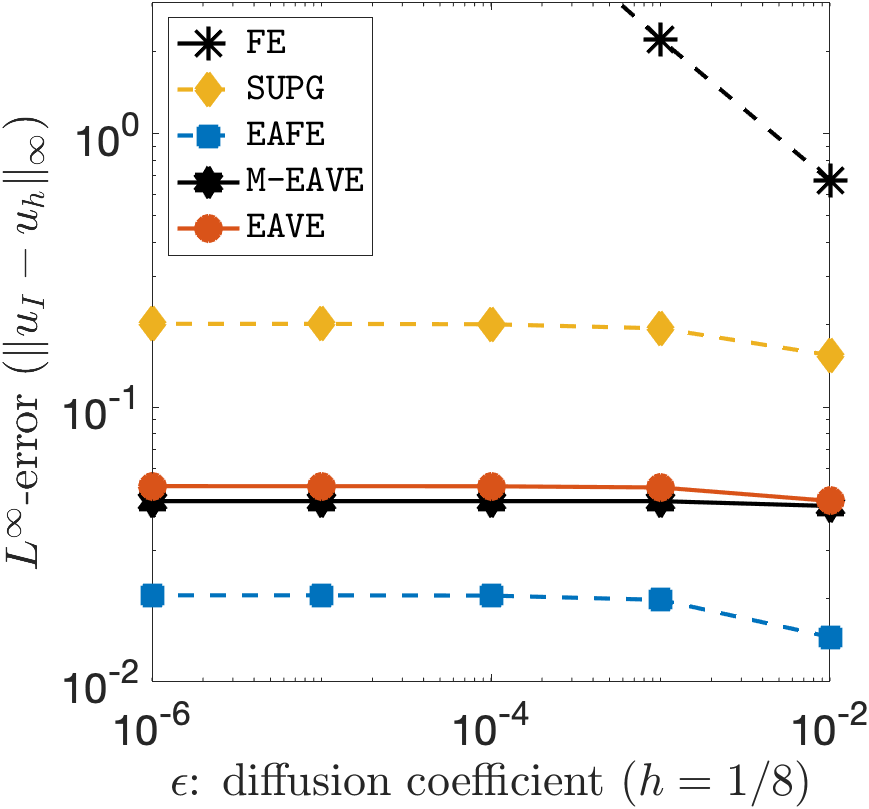}
    \hskip 20pt
\includegraphics[width=.35\textwidth]{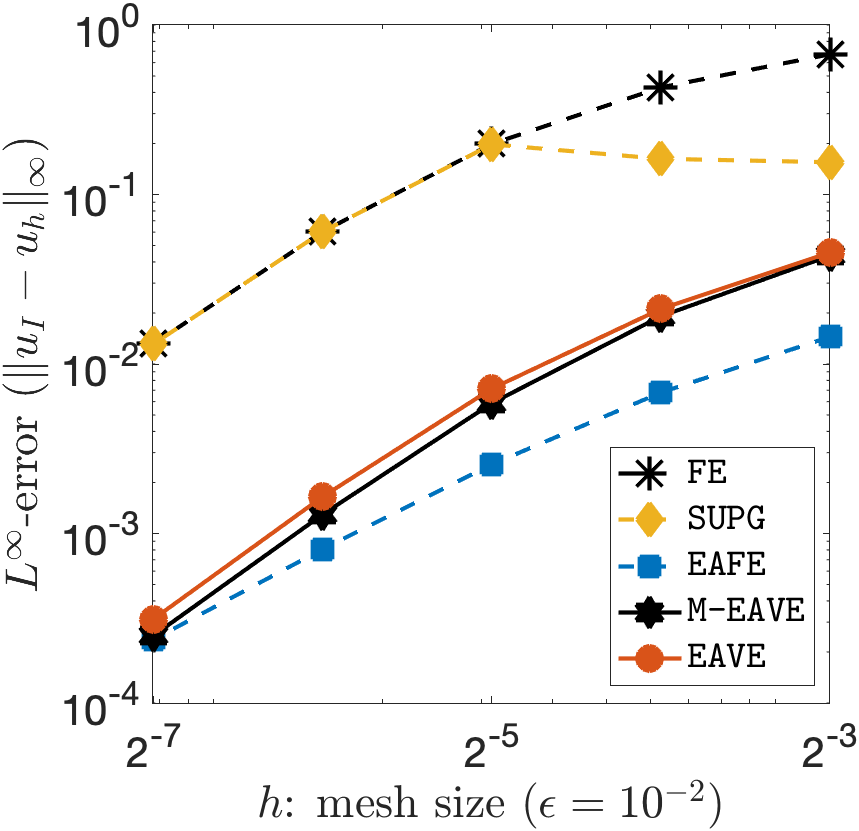}
    \caption{Error profiles of the numerical methods with varying $\epsilon$ (left) and $h$ (right).}
    \label{figure: comparison e&h}
\end{figure}
Figure~\ref{figure: comparison e&h} shows the $L^\infty$-errors of the numerical methods in two ways; (i) with varying $\epsilon$ values from $10^{-2}$ to $10^{-6}$ and the fixed mesh size $h=2^{-3}$, (ii) with different mesh size from $2^{-3}$ to $2^{-7}$ and the fixed $\epsilon=10^{-2}$.
As $\epsilon$ decreases, the $L^\infty$-errors of the \texttt{FE} method rapidly increase, which shows the need for stabilization techniques for convection-dominated cases.
On the other hand, the $L^\infty$-errors of the stabilized methods (\texttt{SUPG}, \texttt{EAFE}, \texttt{M-EAVE}, and \texttt{EAVE}) seem independent of $\epsilon$.
As $h$ decreases, all the methods produce the $L^\infty$-errors decreasing in a similar order, but the errors of edge-averaged methods (\texttt{EAFE}, \texttt{M-EAVE}, and \texttt{EAVE}) are almost a hundred times smaller than the \texttt{SUPG} method.

In order to perform a quantitative comparison, we display the numerical solutions obtained by the \texttt{FE}, \texttt{SUPG}, \texttt{EAFE}, and \texttt{M-EAVE} methods with $\epsilon=10^{-2}$ and $h=2^{-4}$ in Figure~\ref{figure: numesol_ex1}. Since the solution of the \texttt{EAVE} method is almost the same as the \texttt{M-EAVE}, we omit \texttt{EAVE} here.
\begin{figure}[h!]
\centering
\begin{tabular}{cccc}
\texttt{ FE}  & \texttt{ SUPG}
   &\texttt{ EAFE}&\hskip 10pt\texttt{ M-EAVE}\\
    \includegraphics[width=.21\textwidth]{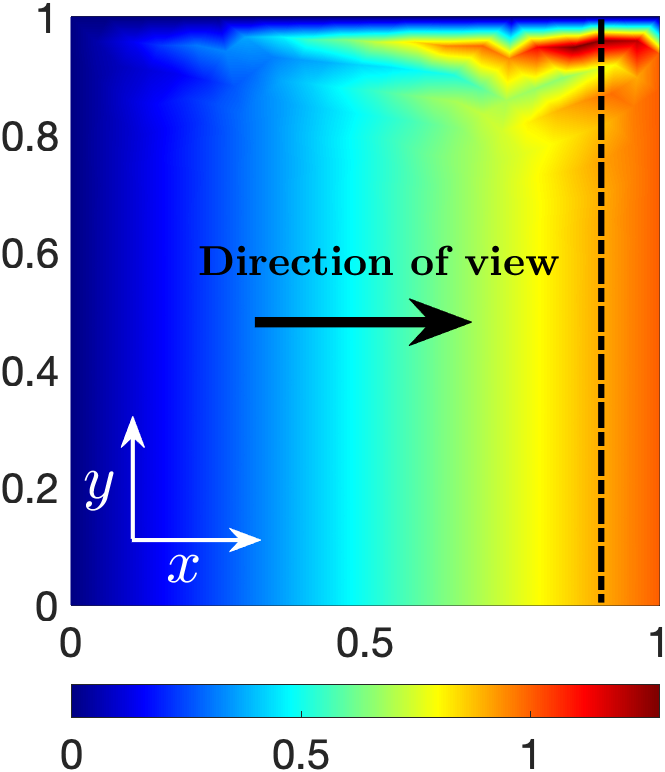}
   &\includegraphics[width=.21\textwidth]{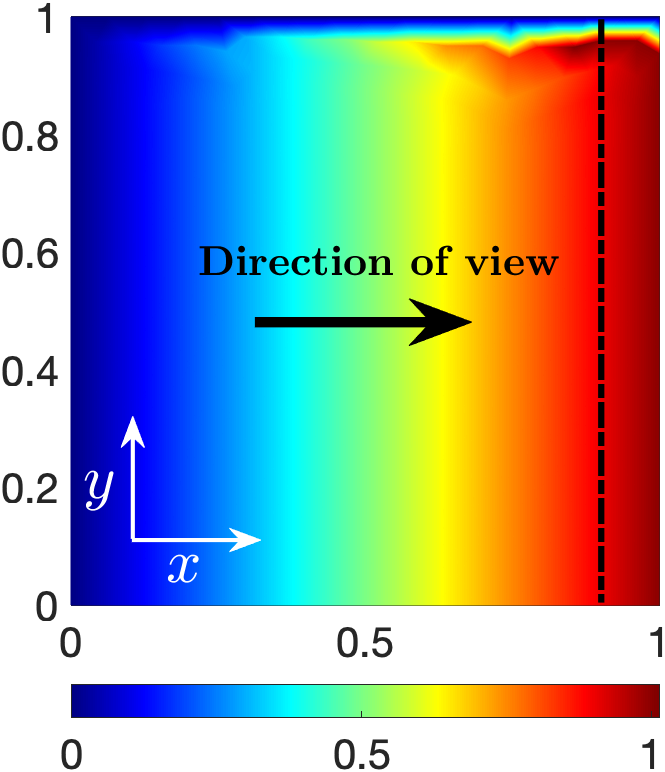}
   &\includegraphics[width=.21\textwidth]{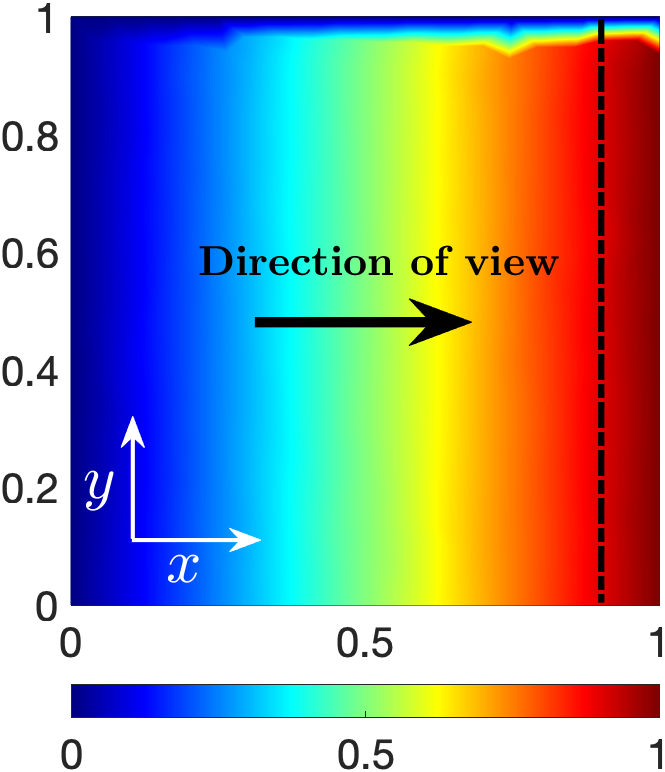}
   &\includegraphics[width=.21\textwidth]{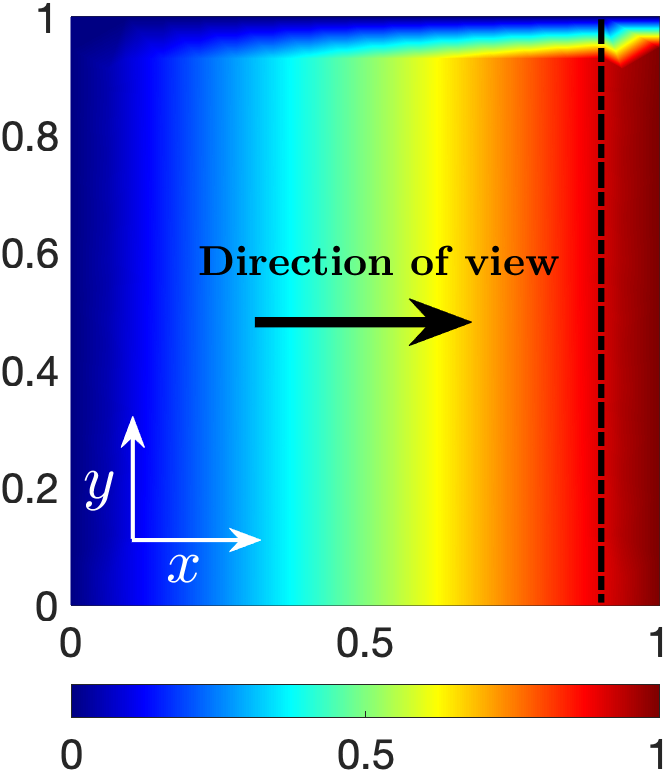}
   \end{tabular}\\
   {\footnotesize (a) Numerical solutions for the methods}\\
   \vskip 5pt
   \begin{tabular}{cccc}
   \texttt{ FE}  &\texttt{ SUPG}
   &\texttt{ EAFE} &\hskip 10pt\texttt{ M-EAVE}\\
   \includegraphics[width=.21\textwidth]{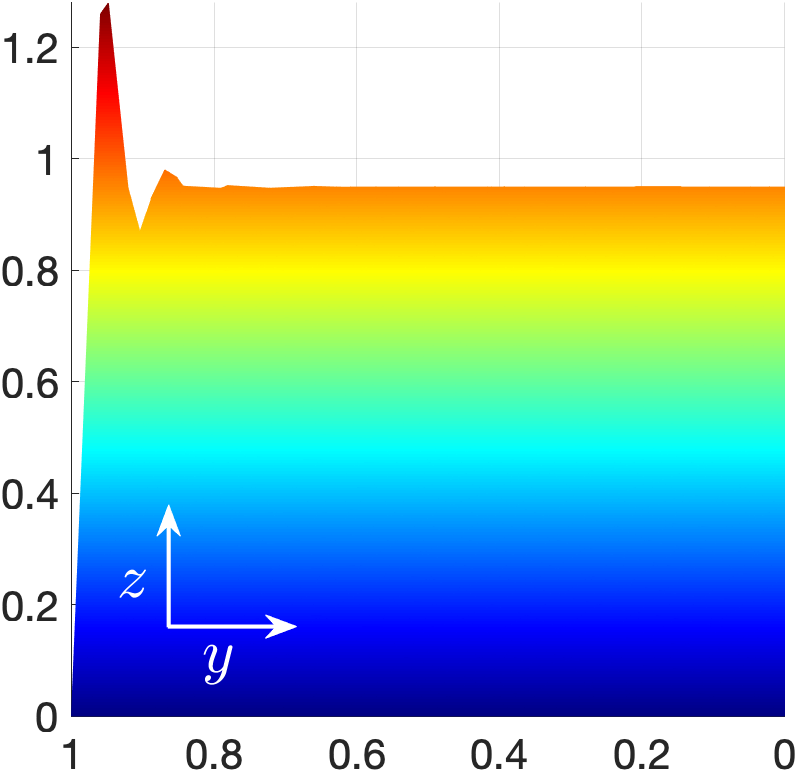}
   &\includegraphics[width=.21\textwidth]{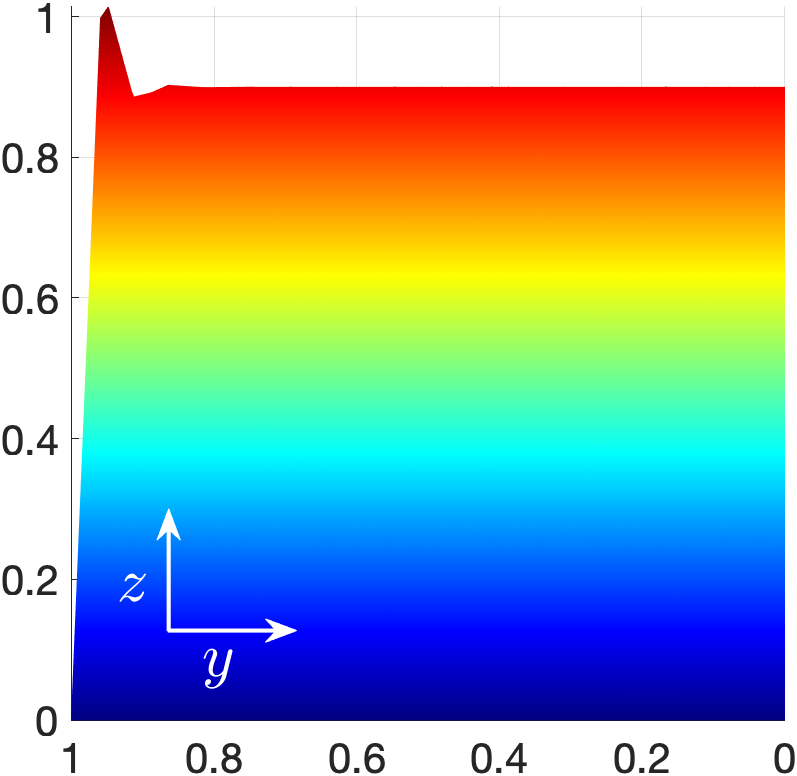}
   &\includegraphics[width=.21\textwidth]{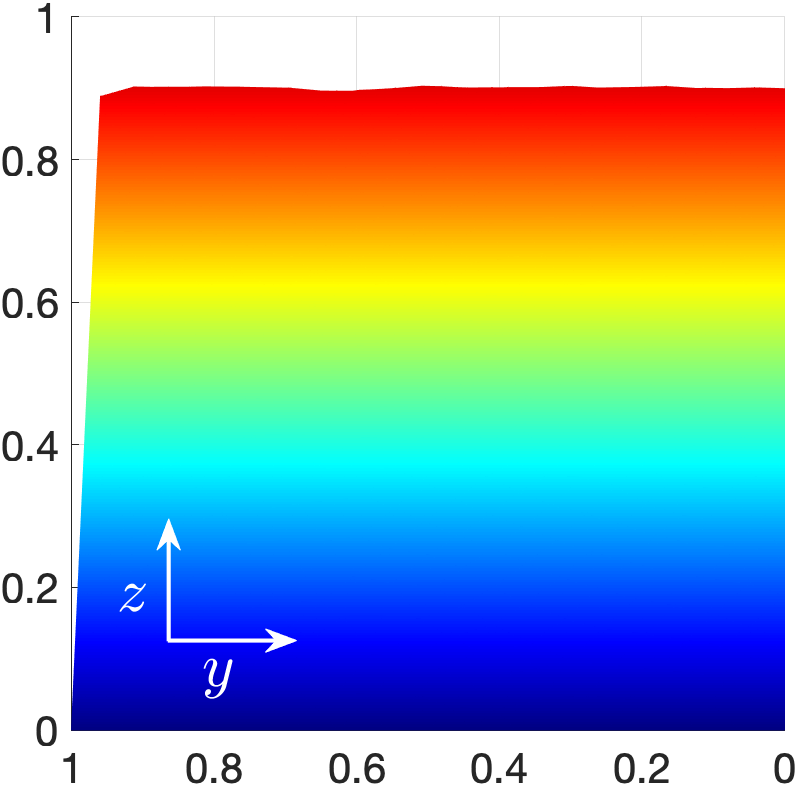}
   &\includegraphics[width=.21\textwidth]{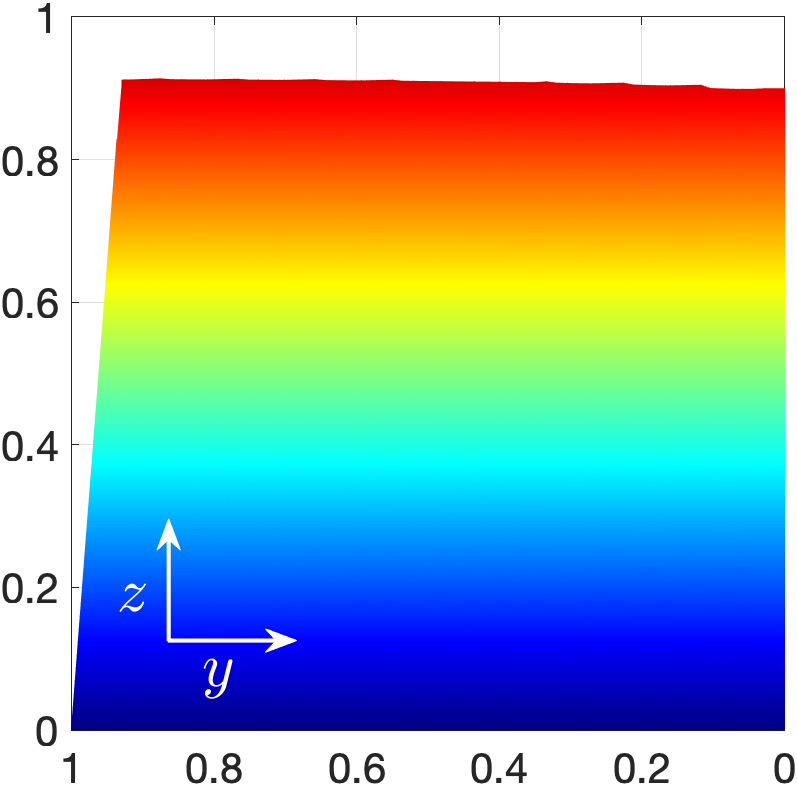}
\end{tabular}\\
{\footnotesize (b) Cross sections of the numerical solutions from $x=0$ to $x=0.9$}
    \caption{Comparison of the numerical solutions with $\epsilon=10^{-2}$ and $h=2^{-4}$.}
    \label{figure: numesol_ex1}
\end{figure}
As expected, the numerical solution for the \texttt{FE} method contains spurious oscillations around the boundary layer, and the magnitude of the biggest oscillation is over 20\% of the maximum of the exact solution.
The \texttt{SUPG}, a stabilized method, provides a more stable numerical solution than the \texttt{FE} method. However, the solution still contains a nonphysical oscillation whose magnitude is over 10\% of the exact solution.
These spurious oscillations may deteriorate the quality of the numerical solutions and cause difficulty in capturing hidden boundary layers.
On the other hand, the \texttt{EAFE} and \texttt{M-EAVE} methods satisfy the monotonicity property, so they produce numerical solutions without such oscillations and nicely capture the sharp boundary layer.
Therefore, we confirm that the \texttt{M-EAVE} and \texttt{EAVE} methods produce a quality approximation on polygonal meshes.

\section{Conclusion}
\label{sec: conclusion}
We have developed edge-averaged virtual element methods for convection-diffusion problems by generalizing the edge-averaged finite element  stabilization~\cite{XU99} to the virtual element methods. A proper flux approximation and mass lumping allow us to obtain a monotone EAVE method on a Voronoi mesh with a dual Delaunay triangulation consisting of acute triangles. The Bernoulli function and the geometric data of such a Voronoi mesh readily compute the bilinear form in the monotone method, while the monotone method guarantees stable numerical solutions. We also presented a general framework for EAVE methods, proposing another EAVE bilinear form working with general polygonal meshes. Our numerical experiments demonstrated the effect of the monotonicity property and the optimal order of convergence on different mesh types.

Extending the EAVE methods to three-dimensional cases will be one of our future research directions. The EAVE methods can be also used for application problems that include convection dominance and require polygonal meshes.

%	\newpage
	\bibliography{EAVE}
	
\end{document}